\newtheorem{theorem}{Theorem}
\newtheorem{definition}[theorem]{Definition}
\newtheorem{example}[theorem]{Example}
\newtheorem{lemma}[theorem]{Lemma}
\newtheorem{proposition}[theorem]{Proposition}
\def\eps{\varepsilon}
\def\N{\mathbb{N}}
\def\R{\mathbb{R}}
\def\P{\mathbb{P}}
\def\E{\mathbb{E}}
\def\F{\mathcal{F}}
\DeclareMathOperator{\var}{var}
\begin{document}

\title[On the shortest distance between orbits]{On the shortest distance between orbits and the longest common substring problem}
\author{Vanessa Barros, Lingmin Liao, Jerome Rousseau}
\address{Vanessa Barros and J\'er\^ome Rousseau, Departamento de Matem\'atica, Universidade Federal da Bahia\\
Av. Ademar de Barros s/n, 40170-110 Salvador, Brazil}
\email{vbarrosoliveira@gmail.com}
\urladdr{https://sites.google.com/site/vbarrosoliveira/home}
\email{jerome.rousseau@ufba.br}
\urladdr{http://www.sd.mat.ufba.br/~jerome.rousseau}

\address{Lingmin Liao, LAMA (Laboratoire d'Analyse et de Math\'ematiques Appliqu\'ees), Universit\'e Paris-Est Cr\'eteil Val de Marne, 61 avenue du G\'en\'eral de Gaulle, 94010 Cr\'eteil Cedex, France }
\email{lingmin.liao@u-pec.fr }
\urladdr{http://perso-math.univ-mlv.fr/users/liao.lingmin/}
\thanks{This work was partially supported by CNPq and FAPESB}

\maketitle

\begin{abstract}
In this paper, we study the behaviour of the shortest distance between orbits and show that under some rapidly mixing conditions, the decay of the shortest distance depends on the correlation dimension. For irrational rotations, we prove a different behaviour depending on the irrational exponent of the angle of the rotation.
For random processes, this problem corresponds to the longest common substring problem. We extend the result of \cite{AW} on sequence matching to $\alpha$-mixing processes with exponential decay. 
\end{abstract}
\section{Introduction}
Motivations to study sequence matching or sequence alignment can be found in various fields of research (e.g. computer science, biology, bioinformatics, geology and linguistics, etc). For instance, to compare two DNA strands, one can be interested in finding the longest common substring, i.e. the longest string of DNA which appears in both strands. Thus, one can measure the level of relationship of the two strands by studying the length of this common substring. For example, for the following two strands
\[ACAATGAGAGGATGACCTTG\]
\[TGACTGTAACTGACACAAGC\]
a longest common substring is ACAA (TGAC is also a longest common substring) and is of length 4 when the total length of the strands is 20.

Other quantities may be of interest in DNA comparison or more generally in sequence alignment and we refer to \cite{RSW,W-book} for more information on the subject. Here we will concentrate on the behaviour of the  length of the longest common substring when the length of the strings grows, more precisely, for two sequences $X$ and $Y$, the behaviour, when $n$ goes to infinity, of
\[M_n(X,Y)=\max\{m:X_{i+k}=Y_{j+k}\textrm{ for $k=1,\dots,m$ and for some $0\leq i,j\leq n-m$}\}.\]
This problem was studied by Arratia and Waterman \cite{AW}, who proved that if $X_1,X_2,\dots$,\\$Y_1,Y_2,\dots$ are i.i.d. such that $\P(X_1=Y_1)=p\in (0,1)$ then
\[\P\left(\lim_{n\rightarrow\infty}\frac{M_n}{\log n}=\frac{2}{-\log p}\right)=1.\]
The same result was also proved for independent irreducible and aperiodic Markov chains on a finite alphabet,  and in this case $p$ is the largest eigenvalue of the matrix $[(p_{ij})^2]$ (where $[p_{ij}]$ is the transition matrix).

In this paper, we generalize Arratia and Waterman's result to $\alpha$-mixing process with exponential decay (or $\psi$-mixing with polynomial decay) and prove that if the R\'enyi entropy $H_2$ exists then
\[\P\left(\lim_{n\rightarrow\infty}\frac{M_n}{\log n}=\frac{2}{H_2}\right)=1.\]

Our theorem applies to both cases of \cite{AW} which are $\alpha$-mixing with exponential decay. Other examples of  $\alpha$-mixing process with exponential decay include Gibbs states of a H\"older-continuous potential \cite{Bowen,Ruelle}. One can see \cite{Bradley} for a nice introduction on strong mixing conditions of stochastic processes (or \cite{brad1} for a more complete version).

Further developments of the work \cite{AW} (e.g. sequences of different lengths, different distributions, more than two sequences, extreme value theory for sequence matching) can be found in \cite{AW1,AGW,AW2,AW3,KO,DKZ,Neu}. We also refer the reader to \cite{pittel,konto,lambert} for related sequence matching problems.

A generalization of the longest common substring problem for dynamical systems is to study the behaviour of the shortest distance between two orbits, that is, for a dynamical system $(X,T,\mu)$, the behaviour, when $n$ goes to infinity, of
\[m_n(x,y)=\min_{i,j=0,\dots,n-1}\left(d(T^ix,T^jy)\right).\]
Indeed, when $X=\mathcal{A}^\N$ for some alphabet $\mathcal{A}$ and $T$ is the shift on $X$, we can consider the distance between two sequences $x,y\in X$ defined by $d(x,y)=e^{-k}$ where $k=\inf\{i\geq0, x_i\neq y_i\}$. 

Then, assuming that $m_n$ is not too small, that is $-\log m_n(x,y)\leq n$ (we will see in Theorem~\ref{thineq} that this condition is satisfied for almost all couples $(x,y)$ if $n$ is large enough), one can observe that almost surely
\[M_n(x,y)\leq -\log m_n(x,y)\leq M_{2n}(x,y).\] 
Thus ${M}_n(x,y)$  and $-\log m_n(x,y)$ have the same asymptotic behaviour.


Even if the shortest distance between two orbits seems to be something natural to define and study, to the best of our knowledge, it has not been done in the literature before. One can observe that this quantity shares some similarities with the correlation sum and the correlation integral of the Grassberger-Procaccia algorithm \cite{gp1,gp2} and the nearest neighbour analysis \cite{cutler}, with the synchronization of coupled map lattices \cite{vaienti1}, with dynamical extremal index \cite{faranda-vaienti}, with the connectivity, proximality and recurrence gauges defined by Boshernitzan and Chaika \cite{BC} and also with logarithm laws and shrinking target properties (see e.g. the survey \cite{athreya}). One can also remark that information on the hitting time (see e.g. \cite{saussol}) can give information on the shortest distance. Indeed, if we define the hitting time of a point $x$ in the ball $B(y,r)$ as $W_r(x,y)=\inf\{k\geq1,T^kx\in B(y,r)\}$ then if $W_r(x,y)\leq n$, we have $m_n(x,y)<r$. 

In this paper, we show that the behaviour of the shortest distance $m_n$ is linked to the correlation dimension of the invariant measure $\mu$, defined (when the limit exists) by
\[{C}_\mu=\underset{r\rightarrow0}{\lim}\frac{\log\int_X\mu\left(B\left(x,r\right)\right)d\mu(x)}{\log r}.\]
More precisely, if the correlation dimension exists, then under some rapid mixing conditions of the system $(X,T,\mu)$, we deduce that for $\mu\otimes\mu$-almost every $(x,y)\in X\times X$,
\[ \underset{n\rightarrow+\infty}{\lim}\frac{\log m_n(x,y)}{-\log n}=\frac{2}{{C}_\mu}.\]
For irrational rotations, we prove that this result does not hold and that the previous limit depends on the irrationality exponent of the angle of the rotation. In the proof, the duality between hitting times and the shortest distance and the result of Kim and Seo \cite{KimSeo} on hitting times for irrational rotations are useful.

Our main results on the shortest distance between orbits and its relation with the correlation dimension are stated in Section~\ref{secshort} and proved in Section~\ref{sec-proof}.  In Section~\ref{sec-lcs}  we state an equivalent formulation of our main theorem (Theorem \ref{thprinc}) for random processes. More precisely, we establish a relation between the longest common substring and the R\'enyi entropy. This result is proved in Section~\ref{sec-discrete}.
The case of irrational rotations is treated in Section~\ref{sec-rot}. We apply our results to multidimensional expanding maps in Section~\ref{secexp}. 
\medskip

\section{Shortest distance between orbits}\label{secshort} 

Let $(X,d)$ be a finite dimensional metric space and $\mathcal{A}$ its Borel $\sigma$-algebra.
Let $(X,\mathcal{A},\mu,T)$ be a measure preserving system (m.p.s.) which means that $T:X\rightarrow X$ is a transformation on $X$ and $\mu$ is a probability measure on $(X,\mathcal{A})$ such that $\mu$ is invariant by $T$, i.e., $\mu(T^{-1}A)=\mu(A)$ for all $A\in\mathcal{A}$.  

We would like to study the behaviour of the shortest distance between two orbits:
\[m_n(x,y)=\min_{i,j=0,\dots,n-1}\left(d(T^ix,T^jy)\right).\]
We will show that the behaviour of $m_n$ as $n\rightarrow\infty$ is linked with the correlation dimension. Before stating the next theorem, we recall the definition of the lower and upper correlation dimensions of $\mu$:
\[\underline{C}_\mu=\underset{r\rightarrow0}{\underline\lim}\frac{\log\int_X\mu\left(B\left(x,r\right)\right)d\mu(x)}{\log r}\qquad\textrm{and}\qquad\overline{C}_\mu=\underset{r\rightarrow0}{\overline\lim}\frac{\log\int_X\mu\left(B\left(x,r\right)\right)d\mu(x)}{\log r}.\]
When the limit exists we will denote the common value of $\underline{C}_\mu$ and $\overline{C}_\mu$ by $C_\mu$. The existence of the correlation dimension and its relation with other dimensions can be found in \cite{Pes, PW, barbaroux}.
\begin{theorem}\label{thineq}

Let $(X,\mathcal{A},\mu,T)$ be a measure preserving system (m.p.s.) such that $\underline{C}_\mu>0$. Then for $\mu\otimes\mu$-almost every $(x,y)\in X\times X$,
\[ \underset{n\rightarrow+\infty}{\overline\lim}\frac{\log m_n(x,y)}{-\log n}\leq\frac{2}{\underline{C}_\mu}.\]

\end{theorem}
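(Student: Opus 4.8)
The plan is to bound the shortest distance from below by showing that for a suitable radius $r_n$ decaying like a power of $n$, with high $\mu\otimes\mu$-probability no pair $(T^ix,T^jy)$ with $0\le i,j\le n-1$ gets closer than $r_n$, and then apply Borel–Cantelli along a subsequence. First I would fix $\eps>0$ and set $r_n = n^{-(2-\eps)/\underline{C}_\mu}$ (the precise exponent to be tuned). The key object to estimate is the ``bad set''
\[
E_n=\{(x,y): m_n(x,y)<r_n\}=\bigcup_{i,j=0}^{n-1}\{(x,y): d(T^ix,T^jy)<r_n\}.
\]
By the union bound and the $T$-invariance of $\mu$ in each coordinate,
\[
(\mu\otimes\mu)(E_n)\le \sum_{i,j=0}^{n-1}(\mu\otimes\mu)\big(d(T^ix,T^jy)<r_n\big)=n^2\,(\mu\otimes\mu)\big(d(x,y)<r_n\big),
\]
using that $(x,y)\mapsto(T^ix,T^jy)$ pushes $\mu\otimes\mu$ forward to itself. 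Now $(\mu\otimes\mu)(d(x,y)<r_n)=\int_X\mu(B(x,r_n))\,d\mu(x)$, which by the definition of $\underline{C}_\mu$ is at most $r_n^{\underline{C}_\mu-\eps'}$ for all small $r_n$, any $\eps'>0$. Substituting the choice of $r_n$ gives $(\mu\otimes\mu)(E_n)\le n^2\, n^{-(2-\eps)(\underline{C}_\mu-\eps')/\underline{C}_\mu}$, and for $\eps,\eps'$ small enough relative to $\eps$ the exponent is $<-1$, so $\sum_n (\mu\otimes\mu)(E_n)<\infty$.

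By Borel–Cantelli, for $\mu\otimes\mu$-almost every $(x,y)$ we have $m_n(x,y)\ge r_n$ for all large $n$, hence
\[
\frac{\log m_n(x,y)}{-\log n}\le \frac{-\log r_n}{-\log n}=\frac{2-\eps}{\underline{C}_\mu}
\]
for all large $n$, so the $\limsup$ is at most $(2-\eps)/\underline{C}_\mu$. Letting $\eps\to0$ through a countable sequence gives the claimed bound $2/\underline{C}_\mu$ on a full-measure set. I also note this argument simultaneously shows $-\log m_n(x,y)\le \frac{2}{\underline{C}_\mu}\log n\le n$ eventually (since $\underline{C}_\mu>0$), which is the side remark about the condition $-\log m_n\le n$ made before the theorem.

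The only mildly delicate points, rather than genuine obstacles: (i) one must be careful that the constant in $\int_X\mu(B(x,r))\,d\mu(x)\le r^{\underline{C}_\mu-\eps'}$ is uniform once $r$ is below a threshold depending only on $\eps'$ — this is immediate from the definition of the $\liminf$, but it is what lets the Borel–Cantelli sum converge; (ii) since no mixing is assumed here (unlike the upper bound on the limit), the crude union bound is exactly the right tool, and crucially it does not require independence — invariance alone suffices to collapse $(\mu\otimes\mu)(d(T^ix,T^jy)<r_n)$ to $\int\mu(B(x,r_n))d\mu$. So the main ``work'' is just choosing the exponents so the $n^2$ factor is beaten, which needs $\underline{C}_\mu>0$ precisely.
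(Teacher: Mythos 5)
Your overall strategy (first moment/union bound, invariance of $\mu\otimes\mu$ under $(x,y)\mapsto(T^ix,T^jy)$, the bound $\int_X\mu(B(x,r))\,d\mu(x)\le r^{\underline{C}_\mu-\eps'}$ for small $r$, then Borel--Cantelli) is the same as the paper's, but the quantitative step fails. The sign of $\eps$ in $r_n=n^{-(2-\eps)/\underline{C}_\mu}$ is wrong: this $r_n$ is \emph{larger} than $n^{-2/\underline{C}_\mu}$, and the exponent you actually get is $2-(2-\eps)(\underline{C}_\mu-\eps')/\underline{C}_\mu=\eps+(2-\eps)\eps'/\underline{C}_\mu>0$, so your bound on $(\mu\otimes\mu)(E_n)$ tends to infinity; it is never $<-1$ for any admissible $\eps,\eps'$, so the claimed summability is false. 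Worse, proving $m_n\ge r_n$ eventually for this choice of $r_n$ would yield $\limsup\le(2-\eps)/\underline{C}_\mu$, a statement strictly stronger than the theorem which is false in general, since by Theorem~\ref{thprinc} the limit equals exactly $2/C_\mu$ for many systems.

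Even after correcting the sign (say $r_n=n^{-(2+\eps)/\underline{C}_\mu}$), the union bound only gives $(\mu\otimes\mu)(E_n)\le n^{-\eps+(2+\eps)\eps'/\underline{C}_\mu}$, which is not summable over all $n$ unless $\eps>1$, and taking $\eps>1$ destroys the constant $2$. The missing idea is the subsequence-plus-monotonicity argument of the paper: with $k_n=\frac{1}{\underline{C}_\mu-\eps}(2\log n+\log\log n)$ and $r_n=e^{-k_n}$ one only gets $(\mu\otimes\mu)(m_n<r_n)\le 1/\log n$, and Borel--Cantelli is applied along the sparse subsequence $n_\ell=\lceil e^{\ell^2}\rceil$ (so the bound $1/\log n_\ell\le\ell^{-2}$ is summable); one then uses that $n\mapsto m_n(x,y)$ is nonincreasing together with $\log n_\ell/\log n_{\ell+1}\to1$ to transfer the estimate from the subsequence to all $n$, obtaining $\limsup\le 2/(\underline{C}_\mu-\eps)$ and letting $\eps\to0$. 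With the sign fixed and this interpolation step added, your argument becomes the paper's proof.
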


Theorem \ref{thineq} is a general result which can be applied to any dynamical system such that $\underline{C}_\mu>0$ and shows us that $m_n$ cannot be too small. If $\underline{C}_\mu=0$, one cannot expect to obtain such information since one can have $m_n(x,y)=0$ on a set of positive measure (for example if the measure $\mu$ is a finite linear combination of Dirac measures). We can also observe that the inequality in Theorem \ref{thineq} can be strict (noting for example the trivial case when $T$ is the identity; a more interesting example, irrational rotations, will be treated in Section~\ref{sec-rot}) but under some natural rapidly mixing conditions we will prove an equality.

\smallskip

We need the following hypotheses. \\
(H1) There exists a Banach space $\mathcal{C}$, such that for all $\psi,\ \phi\in \mathcal{C}$ and for all $n\in\N^*$,  we have
\[\left|\int_X\psi.\phi\circ T^n\, d\mu-\int_X \psi d\mu\int_X\phi d \mu\right|\leq\|\psi\|_\mathcal{C}\|\phi\|_\mathcal{C}\theta_n,\]
with $\theta_n =a^{n}$ ($0\leq a<1$) and where $\|\cdot\|_\mathcal{C}$ is the norm in the Banach space $\mathcal{C}$.

There exist $0<r_0<1$, $c\geq0$ and $\xi \geq0$ such that

\noindent(H2) For any $0<r<r_0$, the function $\psi_1:x\mapsto \mu(B(x,r))$ belongs to the Banach space $\mathcal{C}$ and
\[\|\psi_1\|_\mathcal{C}\leq cr^{-\xi}.\]
(H3) For $\mu$-almost every $y\in X$ and any $0<r<r_0$, the function $\psi_2:x\mapsto \mathbbm{1}_{B(y,r)}(x)$ belongs to the Banach space $\mathcal{C}$ and
\[\|\psi_2\|_\mathcal{C}\leq cr^{-\xi}.\]
We observe that the hypothesis (H3) cannot be satisfied when the Banach space $\mathcal{C}$ is the space of H\"older functions since the characteristic functions are not continuous. We will treat this case separately in Theorem~\ref{thprinc2}.

We will also need some topological information on the space $X$.
\begin{definition}\label{deftight}
A separable metric space $(X,d)$ is called tight if there exist $r_0>0$ and $N_0\in \N$, such that for any $0 < r < r_0$ and any $x\in X$ one can cover $B(x, 2r)$ by at most $N_0$ balls of radius $r$.
\end{definition}
We observe that this is not a very restrictive condition. Indeed, any subset of $\R^n$ with the Euclidian metric is tight and any subset of a Riemannian manifold of bounded curvature is tight (see \cite{GY}). In \cite{GY} it was also proved that if $(X,d)$ admits a doubling measure then it is tight and some examples of spaces which are not tight were given. 

Now we can state our main result.
\begin{theorem}\label{thprinc}
Let $(X,\mathcal{A},\mu,T)$ be a measure preserving system, such that $(X,d)$ is tight, satisfying (H1), (H2), (H3) and such that ${C}_\mu$ exists and is strictly positive. Then for $\mu\otimes\mu$-almost every $(x,y)\in X\times X$,
\[ \underset{n\rightarrow+\infty}{\lim}\frac{\log m_n(x,y)}{-\log n}=\frac{2}{{C}_\mu}.\]
\end{theorem}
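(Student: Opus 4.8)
<br>

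The plan is to deduce the theorem from Theorem~\ref{thineq} together with a second‑moment estimate. Since $C_\mu$ exists we have $\underline C_\mu=C_\mu$, so Theorem~\ref{thineq} already gives $\limsup_{n\to\infty}\frac{\log m_n(x,y)}{-\log n}\le\frac2{C_\mu}$ for $\mu\otimes\mu$‑a.e.\ $(x,y)$; only the lower bound is missing, i.e.\ one must show that for every $s<2/C_\mu$ one has $m_n(x,y)<n^{-s}$ for all large $n$, $\mu\otimes\mu$‑almost surely. Fixing such an $s$, I would set $r_n=n^{-s}$, choose a ``gap'' $g=g_n$ equal to a large multiple of $\log n$, put $N=\lfloor n/g\rfloor$, and introduce the sparsified counting function
\[S_n(x,y)=\sum_{\substack{0\le i,j\le n-1\\ g\mid i,\ g\mid j}}\mathbbm{1}_{\{d(T^ix,T^jy)<r_n\}},\]
so that $\{m_n\ge r_n\}\subseteq\{S_n=0\}$. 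By invariance of $\mu$ one has $\E_{\mu\otimes\mu}[S_n]=N^2\int_X\mu(B(x,r_n))\,d\mu(x)$, and since $C_\mu$ exists and is positive, $\int_X\mu(B(x,r))\,d\mu(x)=r^{C_\mu+o(1)}$; hence $\E[S_n]=(\log n)^{-2}n^{\,2-sC_\mu+o(1)}\to\infty$ because $s<2/C_\mu$ forces $2-sC_\mu>0$. The whole point is then to show $\var(S_n)=o\big((\E[S_n])^2\big)$, after which Chebyshev's inequality controls $(\mu\otimes\mu)(m_n\ge r_n)$.

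To estimate $\E[S_n^2]$ I would expand it over quadruples $(i,j,k,\ell)$ of multiples of $g$ in $[0,n)$ and split according to whether $i=k$ and whether $j=\ell$. For the generic quadruples ($i\ne k$ and $j\ne\ell$) the sparsification forces $|i-k|\ge g$ and $|j-\ell|\ge g$, so one can decouple twice with (H1): once in the $x$‑variable, using $\mathbbm{1}_{B(T^jy,r_n)}\in\mathcal C$ with $\|\mathbbm{1}_{B(T^jy,r_n)}\|_\mathcal C\le cr_n^{-\xi}$ by (H3), and once in the $y$‑variable, using $\psi_1=\mu(B(\cdot,r_n))\in\mathcal C$ with $\|\psi_1\|_\mathcal C\le cr_n^{-\xi}$ by (H2). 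This gives
\[\E\big[\mathbbm{1}_{\{d(T^ix,T^jy)<r_n\}}\mathbbm{1}_{\{d(T^kx,T^\ell y)<r_n\}}\big]\le\Big(\int_X\mu(B(x,r_n))\,d\mu\Big)^2+c^2r_n^{-2\xi}\big(\theta_{|i-k|}+\theta_{|j-\ell|}\big).\]
Summed over all generic quadruples the first term produces exactly $N^4\big(\int\mu(B(x,r_n))d\mu\big)^2=(\E[S_n])^2$, which cancels in the variance; the $\theta$‑errors sum to $O\big(N^4r_n^{-2\xi}\theta_g\big)$, and since $\theta_g=a^g$ with $g$ a large enough multiple of $\log n$ this is a fixed negative power of $n$ times $(\E[S_n])^2$, hence negligible. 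The fully diagonal quadruples ($i=k$ and $j=\ell$) number $N^2$ and contribute $N^2\int\mu(B(x,r_n))d\mu=\E[S_n]$, which is $o\big((\E[S_n])^2\big)$.

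The hard part will be the ``half‑diagonal'' quadruples, say $i=k$ and $j\ne\ell$ (the other case being symmetric). There $d(T^ix,T^jy)<r_n$ together with $d(T^ix,T^\ell y)<r_n$ forces $d(T^jy,T^\ell y)<2r_n$; integrating out $x$ and decoupling in $y$ (the gap $|j-\ell|\ge g$ being available via (H1) and (H3)) reduces the estimate to the third‑order correlation integral $\int_X\mu(B(x,r_n))^2\,d\mu(x)$, and summing over half‑diagonal quadruples one needs this to be $o\big(N\,(\int_X\mu(B(x,r_n))d\mu)^2\big)$. Making this contribution $o\big((\E[S_n])^2\big)$ — i.e.\ comparing the third‑order correlation integral with a suitable power of the second‑order one — is the main obstacle of the proof; I expect it to rely on the tightness of $(X,d)$ (covering $B(x,2r)$ by at most $N_0$ balls of radius $r$, Definition~\ref{deftight}) and on the existence, not merely finiteness, of $C_\mu$, which together let one compare and re‑sum the correlation integrals across the scales $r$ and $2r$. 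This step is genuinely absent from the first‑moment argument behind Theorem~\ref{thineq}.

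Once $\var(S_n)=o\big((\E[S_n])^2\big)$ is established with a polynomial‑in‑$n$ rate, I would conclude as follows. Chebyshev's inequality gives $(\mu\otimes\mu)(m_n\ge r_n)\le(\mu\otimes\mu)(S_n=0)\le\var(S_n)/(\E[S_n])^2\to0$, and this is summable along any geometric subsequence $n_p=\lceil\rho^p\rceil$ with $\rho>1$; by the Borel--Cantelli lemma, for $\mu\otimes\mu$‑a.e.\ $(x,y)$ one has $m_{n_p}(x,y)<r_{n_p}$ for all large $p$. Since $n\mapsto m_n(x,y)$ is non‑increasing, for $n_p\le n<n_{p+1}$ one gets $m_n(x,y)\le m_{n_p}(x,y)<n_p^{-s}$, and as $n_p\ge n/\rho-1$ this yields $\liminf_{n\to\infty}\frac{\log m_n(x,y)}{-\log n}\ge s$. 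Letting $s\uparrow 2/C_\mu$ through a countable sequence, and combining with the upper bound from Theorem~\ref{thineq}, gives the claimed equality.
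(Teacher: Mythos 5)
Your overall scheme coincides with the paper's: the theorem is obtained as Theorem~\ref{thineq} (upper bound, valid since $\underline C_\mu=C_\mu$) plus a second-moment lower bound proved via Chebyshev's inequality, decoupling with (H1) combined with (H2)/(H3), and Borel--Cantelli along a subsequence together with the monotonicity of $n\mapsto m_n(x,y)$; this is exactly the paper's Theorem~\ref{dsup}. Your two variants --- sparsifying $S_n$ over indices in $g\Z$ rather than summing over all $0\le i,j\le n-1$ and splitting by gap size, and working at a fixed sub-critical radius $r_n=n^{-s}$ with $s<2/C_\mu$ instead of the paper's near-critical $r_n=e^{-k_n}$ with a $\log\log$ correction --- are legitimate and even ease the bookkeeping, since your error-to-$(\E S_n)^2$ ratios then decay polynomially in $n$ instead of polylogarithmically. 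Your reduction of the half-diagonal terms ($i=k$, $|j-\ell|\ge g$) to the third-order correlation integral $\int_X\mu(B(x,r_n))^2d\mu(x)$ is also correct: after exchanging the roles of $x$ and $y$ one decouples in $y$ using (H1) and (H3), which is how the paper treats its terms II and III.

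The genuine gap is precisely the step you label ``the main obstacle'': you never prove the comparison between $\int_X\mu(B(x,r))^2d\mu(x)$ and a power of $\int_X\mu(B(x,r))d\mu(x)$, and without it the half-diagonal contribution is not controlled. A naive bound $\mu(B\cap B')\le\min(\mu(B),\mu(B'))$ only yields a contribution of order $N^3\int\mu(B(x,r_n))d\mu$, and requiring this to be $o\bigl((\E S_n)^2\bigr)$ forces $N\int\mu(B(x,r_n))d\mu\to\infty$, which fails for $1/C_\mu<s<2/C_\mu$; so the sharper comparison is indispensable, not a refinement. The paper supplies it as Lemma~\ref{lema1}: for $n$ large, $\int_X\mu(B(y,r_n))^2d\mu(y)\le K\bigl(\int_X\mu(B(y,r_n))d\mu(y)\bigr)^{3/2}$, proved using a $(\lambda,r_n/2)$-grid partition (which exists in any separable metric space, by \cite{GY}), tightness (Definition~\ref{deftight}) to bound by a constant $K_0$ the number of partition elements meeting $B(x_i,2r_n)$, Jensen's inequality, and the subadditivity of $t\mapsto t^{2/3}$. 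Note that this lemma uses only tightness; your guess that the existence (rather than finiteness) of $C_\mu$ is needed here is off the mark --- existence of $C_\mu$ enters only to make the upper and lower bounds match. With Lemma~\ref{lema1} your requirement $\int\mu(B(x,r_n))^2d\mu=o\bigl(N(\int\mu(B(x,r_n))d\mu)^2\bigr)$ reduces to $N\bigl(\int\mu(B(x,r_n))d\mu\bigr)^{1/2}\to\infty$, which holds for every $s<2/C_\mu$, so your argument would close; as written, however, the proposal leaves its central estimate unproven.
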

Since our hypotheses are similar to the ones in \cite{FFT2, AFV}, it is natural to apply our theorem to the same family of examples. Here, we give a short list of simple examples. In Section~\ref{secexp}, we apply our results to a more interesting family of examples: multidimensional piecewise expanding maps.

\smallskip
Denote by $Leb$ the Lebesgue measure.
\begin{example}
Theorem~\ref{thprinc} can be applied to the following systems:
\begin{enumerate}
\item For $m\in\{2,3,\dots\}$, let $T: [0,1] \rightarrow [0,1]$ be such that $x\mapsto mx \mod 1$ and $\mu=Leb$.
\item Let $T:(0,1]\rightarrow(0,1]$ be such that $T(x)=2^k(x-2^{-k})$ for $x\in(2^{-k},2^{-k+1}]$ and $\mu=Leb$.
\item ($\beta$-transformations) For $\beta>1$, let $T: [0,1] \rightarrow [0,1]$ be such that $x\mapsto \beta x \mod 1$ and $\mu$ be the Parry measure (see  \cite{parry}), which is an absolutely continuous probability measure with density $\rho$ satisfying $1-\frac{1}{\beta}\leq\rho(x)\leq(1-\frac{1}{\beta})^{-1}$ for all $x\in[0,1]$. 
\item (Gauss map) Let $T:(0,1]\rightarrow(0,1]$ be such that $T(x)=\left\{\frac{1}{x}\right\}$ and $d\mu=\frac{1}{\log 2}\frac{dx}{1+x}$.
\end{enumerate}
In these examples it is easy to see that $C_\mu=1$. Moreover, (H1), (H2) and (H3) are satisfied with the Banach space $\mathcal{C}=BV$ of functions having bounded variations (see e.g. \cite{FFT} Section 4.1 and \cite{philipp,philipp2,hof}).
\end{example}

One can observe that Theorem \ref{thprinc} is an immediate consequence of Theorem \ref{thineq} and the next theorem.
\begin{theorem}\label{dsup}

Let $(X,\mathcal{A},\mu,T)$ be a measure preserving system, such that $\underline{C}_\mu>0$ and such that $(X,d)$ is tight, satisfying (H1), (H2) and (H3). Then for $\mu\otimes\mu$-almost every $(x,y)\in X\times X$,
\[ \underset{n\rightarrow+\infty}{\underline\lim}\frac{\log m_n(x,y)}{-\log n}\geq\frac{2}{\overline{C}_\mu}.\]

\end{theorem}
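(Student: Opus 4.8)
The plan is to obtain an \emph{upper} bound on $m_n$ by a first Borel--Cantelli argument applied to the pair-counting variable
\[ Z_n(x,y):=\#\{(i,j):0\le i,j\le n-1,\ d(T^ix,T^jy)<r_n\}=\sum_{i=0}^{n-1}\sum_{j=0}^{n-1}\mathbbm{1}_{B(T^jy,r_n)}(T^ix), \]
for a scale $r_n$ chosen just above the critical one. Since $n\mapsto m_n(x,y)$ is non-increasing it suffices to work along $n_k=2^k$ (interpolating between consecutive $n_k$ loses only a vanishing additive term in $\log m_n/(-\log n)$), and since $\{m_n\ge r_n\}=\{Z_n=0\}$, Chebyshev's inequality gives $\mu\otimes\mu(m_n\ge r_n)\le\var(Z_n)/(\E Z_n)^2$, the expectations being with respect to $\mu\otimes\mu$. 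Thus, fixing $\varepsilon>0$ and a small $\delta>0$ and setting $r_n:=n^{-(2/\overline{C}_\mu-\varepsilon)}$, it is enough to prove $\sum_k\var(Z_{n_k})/(\E Z_{n_k})^2<\infty$: Borel--Cantelli then gives $m_{n_k}(x,y)<r_{n_k}$ for all large $k$ for $\mu\otimes\mu$-a.e.\ $(x,y)$, whence $\liminf_n\log m_n(x,y)/(-\log n)\ge 2/\overline{C}_\mu-\varepsilon$, and $\varepsilon\downarrow0$ finishes.

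First I would compute the first moment. By $T$-invariance of $\mu$, for each $i,j$ one has $\int_X\int_X\mathbbm{1}_{B(T^jy,r)}(T^ix)\,d\mu(x)d\mu(y)=\int_X\int_X\mathbbm{1}_{B(y,r)}(x)\,d\mu(x)d\mu(y)=C(r)$, so $\E Z_n=n^2C(r_n)$. Since $\overline{C}_\mu=\limsup_{r\to0}\log C(r)/\log r$, for $\delta$ small enough $C(r_n)\ge r_n^{\overline{C}_\mu+\delta}$ for $n$ large, so $\E Z_n\ge n^{2-(2/\overline{C}_\mu-\varepsilon)(\overline{C}_\mu+\delta)}=n^{\varepsilon\overline{C}_\mu-O(\delta)}\to\infty$.

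The core of the proof is the second-moment estimate. I would expand $\E[Z_n^2]=\sum_{i,j,i',j'}\E[\mathbbm{1}_{B(T^jy,r)}(T^ix)\,\mathbbm{1}_{B(T^{j'}y,r)}(T^{i'}x)]$ and split the $4$-tuples according to whether $i=i'$ and whether $j=j'$. For the fully off-diagonal terms ($i\ne i'$, $j\ne j'$) one integrates first in $x$, reducing $|i-i'|$ to a shift by invariance and applying (H1) with characteristic functions of balls (in $\mathcal{C}$ by (H3) for a.e.\ centre), and then in $y$, applying (H1) again to $\psi_1(z)=\mu(B(z,r))$ (in $\mathcal{C}$ by (H2)); this gives $C(r)^2+O\big(r^{-2\xi}(a^{|i-i'|}+a^{|j-j'|})\big)$, whose $C(r)^2$-part sums to at most $(\E Z_n)^2$ and whose error, after separating index gaps below and above a threshold of order $\log n$ (so that $a^{\text{gap}}$ beats $r^{-2\xi}$), is $o((\E Z_n)^2)$. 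The pure diagonal ($i=i'$, $j=j'$) contributes at most $\E Z_n$. The delicate terms are the two partial diagonals, for instance $i=i'$, $j\ne j'$: integrating in $x$ leaves $\mu\big(B(T^jy,r)\cap B(T^{j'}y,r)\big)$, which vanishes unless $d(T^jy,T^{j'}y)<2r$, and one must estimate $\sum_{i}\sum_{j\ne j'}\int_X\mu\big(B(T^jy,r)\cap B(T^{j'}y,r)\big)\,d\mu(y)$. Here I would use the tightness of $(X,d)$ to cover the balls involved by $N_0$-bounded families of balls of radius $r$, reducing this to correlation integrals controlled by (H1)+(H3), together with a bound on the recurrence sets $\{y:d(y,T^ly)<2r\}$, in the spirit of the estimates of \cite{FFT2,AFV}. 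The outcome should be $\var(Z_n)=\E[Z_n^2]-(\E Z_n)^2=O\big(\E Z_n\cdot\operatorname{polylog}n\big)+o((\E Z_n)^2)$, so $\var(Z_{n_k})/(\E Z_{n_k})^2=O\big(\operatorname{polylog}n_k/n_k^{\varepsilon\overline{C}_\mu-O(\delta)}\big)$ is summable in $k$.

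The hard part will be precisely this partial-diagonal estimate. At the near-critical scale $\E Z_n$ is only a small power of $n$, so one cannot afford the crude bound $\mu\big(B(T^jy,r)\cap B(T^{j'}y,r)\big)\le\mu(B(T^jy,r))\le 1$: it loses a full factor of $n$ and degrades the final exponent from $2/\overline{C}_\mu$ to $1/\overline{C}_\mu$. What has to be shown is that the $x$-orbit (resp.\ the $y$-orbit) revisits a fixed ball of radius $r$ essentially only as often as chance predicts, i.e.\ that $\int_X\mu\big(B(T^jy,r)\cap B(T^{j'}y,r)\big)\,d\mu(y)$ is comparable to $C(r)^2$ (up to logarithmic factors and a summable-in-$|j-j'|$ tail), and this is exactly where the quantitative mixing (H1) and the norm bounds $\|\cdot\|_{\mathcal{C}}\le cr^{-\xi}$ of (H2)--(H3) — rather than merely the existence of the correlation dimension — are genuinely needed.
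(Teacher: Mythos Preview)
Your overall strategy (second moment, Chebyshev, Borel--Cantelli along a geometric subsequence) matches the paper's, and your treatment of the terms with both index gaps large via two applications of (H1) is correct. There are two gaps in the near-diagonal analysis. First, the split must be by gap size ($|i-i'|\le g$ versus $>g$ with $g\asymp\log n$), not by exact equality: your decoupling bound $C(r)^2+O\big(r^{-2\xi}(a^{|i-i'|}+a^{|j-j'|})\big)$ is useless for small nonzero gaps, and summing the error term over all $i\ne i'$, $j\ne j'$ gives $O(n^3 r^{-2\xi})$, a large positive power of $n$, not $o((\E Z_n)^2)$. The terms with $1\le|i-i'|\le g$ must receive the same treatment as $i=i'$; the paper's four buckets are $\{|i-i'|>g,|j-j'|>g\}$, $\{|i-i'|>g,|j-j'|\le g\}$, its symmetric, and $\{|i-i'|\le g,|j-j'|\le g\}$.

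Second, and this is the missing idea: your route for the partial diagonal via recurrence sets $\{y:d(y,T^ly)<2r\}$ is not available, since (H1)--(H3) give no control on short returns. The paper's device for the bucket $|i-i'|>g$, $|j-j'|\le g$ is to decouple in $x$ (via (H1)+(H3)), then use Cauchy--Schwarz and invariance to bound $\int\mu(B(T^jy,r))\mu(B(T^{j'}y,r))\,d\mu(y)\le\int\mu(B(y,r))^2\,d\mu(y)$, and finally invoke the key lemma, proved from tightness via a $(\lambda,r)$-grid partition:
\[
\int_X\mu(B(y,r))^2\,d\mu(y)\ \le\ K\Big(\int_X\mu(B(y,r))\,d\mu(y)\Big)^{3/2}=K\,C(r)^{3/2}.
\]
This is where tightness actually enters---not to cover ball intersections as you suggest, but to compare the second and first moments of $y\mapsto\mu(B(y,r))$. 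Note the exponent: one only gets $C(r)^{3/2}$, not the $C(r)^2$ you aim for, and that suffices because this bucket has merely $\sim n^3g$ terms, contributing at most $Kg/(n\,C(r)^{1/2})\to 0$ to $\var(Z_n)/(\E Z_n)^2$. The fully near-diagonal bucket $\{|i-i'|\le g,|j-j'|\le g\}$ has only $\sim n^2g^2$ terms, and there the crude bound $\le C(r)$ per term is enough.
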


When the Banach space $\mathcal{C}$ is the space of H\"older functions $\mathcal{H}^\alpha(X,\R)$ we can adapt our proof and do not need to assume (H3). Moreover, (H2) can be replaced by a condition on the measure of an annulus:

(HA) There exist $r_0>0$, $\xi\geq0$ and $\beta>0$ such that for every $x\in X$ and any $r_0>r>\rho>0$,
\[\mu(B(x,r+\rho)\backslash B(x,r-\rho))\leq r^{-\xi}\rho^\beta.\]
In fact, we will show in the proof of the next theorem that (HA) implies (H2). 
Analogous conditions to (HA) have already appeared in the literature (e.g. \cite{GHN, saussol, chazottes, haydn-w}) but in a local version. Here, we need a stronger global version. Nevertheless, one can easily observe that this assumption is still satisfied if the measure is Lebesgue or absolutely continuous with respect to Lebesgue with a bounded density. 

\begin{theorem}\label{thprinc2}

Let $(X,\mathcal{A},\mu,T)$ be a measure preserving system, such that $\underline{C}_\mu>0$ and such that $(X,d)$ is tight, satisfying (H1) with $\mathcal{C}=\mathcal{H}^\alpha(X,\R)$ and (HA) or (H2). Then for $\mu\otimes\mu$-almost every $(x,y)\in X\times X$,
\[ \underset{n\rightarrow+\infty}{\underline\lim}\frac{\log m_n(x,y)}{-\log n}\geq\frac{2}{\overline{C}_\mu}.\]
\end{theorem}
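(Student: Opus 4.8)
The plan is to exploit the hitting-time duality noted in the introduction. We assume $0<\overline{C}_\mu<\infty$, the case $\overline{C}_\mu=+\infty$ being trivial. Write $I(r):=\int_X\mu(B(x,r))\,d\mu(x)$, and fix a rational $p<2/\overline{C}_\mu$; it is enough to prove that for $\mu\otimes\mu$-almost every $(x,y)$ one has $m_n(x,y)<n^{-p}$ for all large $n$, and then to intersect over a sequence $p\uparrow 2/\overline{C}_\mu$. Since $n\mapsto m_n(x,y)$ is non-increasing, we may argue along a geometric subsequence $n_k=\lceil\lambda^k\rceil$ ($\lambda>1$), set $r_k:=n_{k+1}^{-p}$, prove $\sum_k(\mu\otimes\mu)(m_{n_k}\ge r_k)<\infty$, apply the Borel--Cantelli lemma, and fill the gaps by monotonicity of $m_n$. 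By definition of $\overline{C}_\mu$ we have $I(r)\ge r^{\overline{C}_\mu+\varepsilon}$ for all small $r$, so for $\varepsilon$ small enough $p(\overline{C}_\mu+\varepsilon)<2$, whence $n_k^2 I(r_k)\to\infty$.

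Fix $y$ and let $U_n(y):=\bigcup_{j=0}^{n-1}B(T^jy,r_n)$ be the $r_n$-tube around the $y$-orbit segment, so that $m_n(x,y)<r_n$ exactly when the orbit $x,Tx,\dots,T^{n-1}x$ meets $U_n(y)$. Put $Z_n(x,y):=\#\{\,0\le i<n:\ T^ix\in U_n(y)\,\}$, so $\{m_n<r_n\}=\{Z_n\ge1\}$, and by Chebyshev's inequality in the $x$-variable
\[
\mu(\,Z_n(\cdot,y)=0\,)\ \le\ \frac{\var_x(Z_n(\cdot,y))}{(\E_x Z_n(\cdot,y))^2},\qquad\text{with}\qquad \E_x Z_n(\cdot,y)=n\,\mu(U_n(y)).
\]
To bound $\E_x Z_n^2=\sum_{i,i'}\mu(\,T^ix\in U_n(y),\ T^{i'}x\in U_n(y)\,)$ we split according to $|i-i'|\le g$ and $|i-i'|>g$, where $g=\lceil K\log n\rceil$ with $K$ large. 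Bound each of the at most $n(2g+1)$ near-diagonal terms by $\mu(U_n(y))$. For $|i-i'|>g$, decorrelate using (H1); since the indicator $\mathbbm{1}_{U_n(y)}$ does not belong to $\mathcal{H}^\alpha$ — this is exactly where (H3) is unavailable — replace it by a Lipschitz function $\varphi$ with $\mathbbm{1}_{U_n(y)}\le\varphi\le\mathbbm{1}_{(U_n(y))_\rho}$ and $\Lip(\varphi)\le\rho^{-1}$, so that $\|\varphi\|_{\mathcal{H}^\alpha}\lesssim\rho^{-\alpha}$; (H1) then gives $\mu(T^ix\in U_n(y),\ T^{i'}x\in U_n(y))\le\mu((U_n(y))_\rho)^2+C\rho^{-2\alpha}\theta_{|i-i'|}$. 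A key point: decorrelating $\mathbbm{1}_{U_n(y)}$ against itself produces the \emph{square} $\mu(U_n(y))^2$, not a ``degree-three'' integral $\int_X\mu(B(x,r_n))^2\,d\mu(x)$, which the hypotheses would not control; this is why we fix $y$ and work with the whole tube rather than with the naive count of close index pairs $(i,j)$. Summing and using $\theta_m=a^m$ yields
\[
\mu(\,Z_n(\cdot,y)=0\,)\ \lesssim\ \frac{\mu((U_n(y))_\rho)^2-\mu(U_n(y))^2}{\mu(U_n(y))^2}\ +\ \frac{\rho^{-2\alpha}a^{g}}{n\,\mu(U_n(y))^2}\ +\ \frac{g}{n\,\mu(U_n(y))}.
\]
The first (``overshoot'') term is controlled by (HA), which as announced also yields (H2): from $\mu((U_n(y))_\rho)-\mu(U_n(y))\le\sum_{j}\mu(B(T^jy,r_n+\rho)\setminus B(T^jy,r_n))\le n\,r_n^{-\xi}\rho^{\beta}$, the choice $\rho:=r_n^{(\overline{C}_\mu+\xi+1)/\beta}$ makes it negligible compared with $\mu(U_n(y))$, while $\rho^{-2\alpha}$ remains a fixed power of $n$, beaten by $a^{g}=n^{-K|\log a|}$ once $K$ is large. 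When only (H2) is assumed, one argues instead via the right-continuity of $r\mapsto I(r)$ and of $x\mapsto\mu(B(x,r))$, together with a union bound over a countable set of radii, to obtain the same control.

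Everything then comes down to a lower bound $\mu(U_n(y))\gtrsim n^{1-\eta}I(r_n)$ valid for $\mu$-almost every $y$ and all large $n$ — more precisely, on good sets $G_k\subset X$ with $\sum_k\mu(X\setminus G_k)<\infty$, on which one also requires $\mu((U_{n_k}(y))_{\rho_k})\le 2\mu(U_{n_k}(y))$ — where $\eta=\eta(p)$ is small enough that $n\cdot n^{1-\eta}I(r_n)\to\infty$ whenever $p<2/\overline{C}_\mu$. Granting this, $\E_x Z_{n_k}(\cdot,y)=n_k\,\mu(U_{n_k}(y))\to\infty$, each of the three terms above is bounded by $g$ times a negative power of $n_k$ uniformly over $y\in G_k$, hence summable in $k$, and $(\mu\otimes\mu)(m_{n_k}\ge r_k)\le\mu(X\setminus G_k)+\int_{G_k}\mu(Z_{n_k}(\cdot,y)=0)\,d\mu(y)$ is summable by Fubini; Borel--Cantelli then concludes.

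I expect this lower bound on $\mu(U_n(y))$ to be the main obstacle. The natural route is the Paley--Zygmund-type inequality $\mu(U_n(y))=\mu(h_y>0)\ge(\int_X h_y\,d\mu)^2\big/\int_X h_y^2\,d\mu$ with $h_y:=\sum_{j<n}\mathbbm{1}_{B(T^jy,r_n)}$: here $\int_X h_y\,d\mu=\sum_{j<n}\mu(B(T^jy,r_n))=:S_n(y,r_n)$ is comparable to $nI(r_n)$ for $\mu$-almost every $y$ by a Birkhoff-type argument, made uniform in the moving radius by combining the bound $\|\mu(B(\cdot,r))\|_{\mathcal{H}^\alpha}\lesssim r^{-\xi}$ from (H2) with (H1); and $\int_X h_y^2\,d\mu\le(1+D_n(y))\,S_n(y,r_n)$, where $D_n(y):=\max_{j<n}\#\{\,j'<n:\ T^{j'}y\in B(T^jy,2r_n)\,\}$ is the maximal overlap multiplicity of the orbit segment at scale $r_n$. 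Bounding $D_n(y)$ is a recurrence-type estimate — the orbit segment $T^0y,\dots,T^{n-1}y$ has to be essentially $2r_n$-separated at the scale $r_n=n^{-p}$ — and controlling it, uniformly enough in $y$, by a quantity that is small compared with $n^{2-p(\overline{C}_\mu+\varepsilon)}$ is where (H1), (H2) and the standing hypothesis $\underline{C}_\mu>0$ must be combined with care.
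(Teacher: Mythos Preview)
Your approach diverges substantially from the paper's, and the divergence stems from a misconception. You assert that working with the double-indexed sum $S_n(x,y)=\sum_{i,j}\mathbbm{1}_{A_{ij}(y)}(x)$ would force a ``degree-three'' integral $\int_X\mu(B(x,r_n))^2\,d\mu(x)$ that ``the hypotheses would not control''. But this is precisely what the tightness assumption is for: Lemma~\ref{lema1} shows that on a tight space
\[
\int_X\mu(B(y,r_n))^2\,d\mu(y)\ \le\ K\Bigl(\int_X\mu(B(y,r_n))\,d\mu(y)\Bigr)^{3/2},
\]
which is exactly the bound needed to make the near-diagonal contribution summable. The paper therefore keeps the double sum $S_n$, reruns the second-moment computation of Theorem~\ref{dsup} verbatim, and modifies only the two places where (H3) had been invoked: in \eqref{eqmix1} it replaces $\mathbbm{1}_{B(T^jy,r_n)}$ by a Lipschitz approximant (as in Lemma~9 of \cite{rapid}), and for \eqref{eqmix2} it checks that (HA) forces $x\mapsto\mu(B(x,r))$ to be H\"older with norm $\lesssim r^{-\xi}$, i.e.\ (HA) implies (H2). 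That is the entire proof.

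By routing around the double sum you trade a one-line lemma for a genuinely hard step: your argument is left incomplete at the lower bound $\mu(U_n(y))\gtrsim n^{1-\eta}I(r_n)$, which you reduce to controlling the overlap multiplicity $D_n(y)=\max_{j<n}\#\{j'<n:\ T^{j'}y\in B(T^jy,2r_n)\}$. This is a short-return-time estimate at the shrinking scale $r_n\asymp n^{-p}$, and nothing in (H1), (H2), (HA) or $\underline{C}_\mu>0$ delivers it directly; decorrelation only bounds $\mu\{y:d(y,T^ky)<2r_n\}$ once $k$ exceeds a multiple of $\log(1/r_n)$, and for the near-diagonal gaps $k$ you have no control at all. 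Your treatment of the ``(H2) only'' case is also too vague: right-continuity of $r\mapsto I(r)$ does not bound the pointwise overshoot $\mu((U_n(y))_\rho)-\mu(U_n(y))$. In the paper's scheme none of this arises, because the Lipschitz approximation in \eqref{eqmix1} merely replaces $r_n$ by a slightly larger radius inside an integral over $y$, after which \eqref{eqmix2} proceeds under (H2) alone.
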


Here are some interesting examples
where Theorem \ref{thprinc2} applies: planar dispersing billiard maps (with finite and infinite horizon) and Lorenz maps (see \cite{GHN} Section 4 and the references therein), expanding maps of the interval with a Gibbs measure associated to a H\"older potential (see \cite{saussol} where (HA) is proved in Lemma 44) and $C^2$ endomorphism (of a $d$-dimensional compact Riemannian manifold) admitting a Young tower with exponential tail (see \cite{FFT10} Section 6 and \cite{collet}).

\medskip

\section{Longest common substring problem}\label{sec-lcs}
As explained in the introduction, finding the shortest distance between two orbits corresponds, when working with symbolic dynamical systems, to a sequence matching problem: finding the size of the longest common substrings between two sequences.

We will consider the symbolic dynamical system $(\Omega,\P,\sigma)$ where $\Omega=\mathcal{A}^\N$ for some alphabet $\mathcal{A}$, $\sigma$ is the (left) shift on $\Omega$ and $\P$ is an invariant probability measure. For two sequences $x,y\in\Omega$, we are interested in the behaviour of
\[M_n(x,y)=\max\{m:x_{i+k}=y_{j+k}\textrm{ for $k=1,\dots,m$ and for some $1\leq i,j\leq n-m$}\}.\]
We will show that the behaviour of $M_n$ is linked with the R\'enyi entropy of the system. 

 For $y\in \Omega$ we denote by $C_n(y)=\{z \in \Omega :z_i = y_i\text{ for all } 
0\le i\le n-1\} $ the  \emph{$n$-cylinder} containing $y$. Set $\F_0^n$ as the sigma-algebra over $\Omega$ 
generated by all $n$-cylinders.

We define the lower and upper R\'enyi entropy as the following limits:
\[\underline{H}_2=\underset{k\rightarrow+\infty}{\underline\lim}\frac{\log\sum \P(C_k)^2}{-k}\qquad\textrm{and}\qquad\overline{H}_2=\underset{k\rightarrow+\infty}{\overline\lim}\frac{\log\sum \P(C_k)^2}{-k},\]
where the sums are taken over all $k$-cylinders. When the limit exists, we will denote it $H_2$.

The existence of the R\'enyi entropy has been proved for Bernoulli and Markov measures, Gibbs states of a H\"older-continuous potential, weakly $\psi$-mixing processes \cite{haydn-vaienti} and recently for $\psi_g$-regular processes \cite{abadi-car}.

We say that our system is {\it $\alpha$-mixing} if there exists
a function $\alpha:\N \rightarrow\R$ satisfying $\alpha(g)\to0$ when $g\to+\infty$ and such that
for all $m,n \in \N$, $A\in\F_0^n$ and $B\in \F_0^{m}$:
\[
\left|\P(A\cap\sigma^{-g-n}B) -\P(A)\P(B)\right|\le \alpha(g).
\]
It is said to be  {\it $\alpha$-mixing with an exponential decay} if the function $\alpha(g)$ decreases exponentially fast to $0$.

We say that our system is {\it $\psi$-mixing} if there exists
a function $\psi:\N \rightarrow\R$ satisfying $\psi(g)\to 0$ when $g\to+\infty$ and such that
for all $m,n \in \N$, $A\in\F_0^n$ and $B\in \F_0^{m}$:
\[
\left|\P(A\cap\sigma^{-g-n}B) -\P(A)\P(B)\right|\le \psi(g)\P(A)\P(B).
\]
Now we are ready to state our next result.

\begin{theorem}\label{seqmat}

If $\underline{H}_2>0$, then for $\P\otimes\P$-almost every $(x,y)\in \Omega\times \Omega$,
\begin{equation}\label{eqren1}
 \underset{n\rightarrow+\infty}{\overline\lim}\frac{M_n(x,y)}{\log n}\leq\frac{2}{\underline{H}_2}.\end{equation}
Moreover, if the system is $\alpha$-mixing with an exponential decay or if the system is $\psi$-mixing with $\psi(g)=g^{-a}$ for some $a>0$ then, for $\P\otimes\P$-almost every $(x,y)\in \Omega\times \Omega$,
\begin{equation}\label{eqren2} \underset{n\rightarrow+\infty}{\underline\lim}\frac{M_n(x,y)}{\log n}\geq\frac{2}{\overline{H}_2}.\end{equation}
Furthermore, if the R\'enyi entropy exists, then for $\P\otimes\P$-almost every $(x,y)\in \Omega\times \Omega$,
\[ \underset{n\rightarrow+\infty}{\lim}\frac{M_n(x,y)}{\log n}=\frac{2}{H_2}.\]

\end{theorem}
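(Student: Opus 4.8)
The last equality follows at once from the two inequalities \eqref{eqren1} and \eqref{eqren2} together with $\underline{H}_2=\overline{H}_2=H_2$ when the R\'enyi entropy exists, so the plan is to prove those two inequalities.

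For \eqref{eqren1}, I would deduce it from Theorem~\ref{thineq}. Endow $\Omega$ with the metric $d(x,y)=e^{-\inf\{i\ge 0:\,x_i\ne y_i\}}$; then $\sigma$ is $\P$-invariant, $(\Omega,d)$ is tight (a ball $B(x,2r)$ is a cylinder, covered by $\card(\mathcal{A})$ cylinders of radius $r$), and for $r\in[e^{-k-1},e^{-k})$ one has $\int_\Omega\P(B(x,r))\,d\P(x)=\sum\P(C_{k+1})^2$, so $\underline{C}_\P=\underline{H}_2$ and $\overline{C}_\P=\overline{H}_2$. Since $\underline{H}_2>0$, Theorem~\ref{thineq} gives $\limsup_n(-\log m_n(x,y))/\log n\le 2/\underline{H}_2$ for $\P\otimes\P$-a.e.\ $(x,y)$; in particular $-\log m_n(x,y)\le n$ eventually, so the elementary inequality $M_n(x,y)\le-\log m_n(x,y)$ from the introduction yields \eqref{eqren1}. (Alternatively one proves \eqref{eqren1} directly: a union bound over the $\le n^2$ pairs of starting positions together with $\sigma$-invariance gives $\P\otimes\P(M_n\ge m)\le n^2\sum_w\P(C_m=w)^2\le n^2 e^{-m(\underline{H}_2-\eps)}$ for $m$ large, and choosing $m=\lceil(2+\delta)(\log n)/(\underline{H}_2-\eps)\rceil$ makes this summable along $n=2^\ell$; Borel--Cantelli and the monotonicity of $n\mapsto M_n$ finish, on letting $\delta,\eps\to0$.)

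The substantial part is \eqref{eqren2}, which I would obtain by a second moment argument run directly in the symbolic setting, since the hypotheses of Theorem~\ref{dsup} do not transfer (hypothesis (H1) asks for decay of correlations against a fixed Banach space, whereas $\alpha$- and $\psi$-mixing only control correlations as the gap between coordinate blocks grows). Fix $\eps,\delta>0$, put $m=m(n)=\lfloor(2-\delta)(\log n)/(\overline{H}_2+\eps)\rfloor$ and $Q_m:=\sum_w\P(C_m=w)^2\ge e^{-m(\overline{H}_2+\eps)}=n^{-(2-\delta)}$ for $n$ large, and fix a slowly growing gap $g=g(n)$. Chop $\{1,\dots,n\}$ into $B\sim n/(m+g)$ consecutive blocks of length $m$ separated by gaps of length $g$, and let $N=N_n(x,y)$ count the pairs of blocks $(a,b)$ on which $x$ and $y$ agree; then $M_n\ge m$ on $\{N\ge1\}$ and, by $\sigma$-invariance, $\E(N)=B^2 Q_m\ge c\,n^{\delta}/(m+g)^2\to\infty$. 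It remains to show $\E(N^2)=(1+o(1))\E(N)^2$, whence $\P\otimes\P(N=0)\le\var(N)/\E(N)^2\to0$. Expanding $\E(N^2)$ over pairs of block-pairs and using that $x$ and $y$ are independent under $\P\otimes\P$, there are three contributions: the diagonal, of size $\E(N)=o(\E(N)^2)$; the genuinely off-diagonal pairs, where all four blocks are pairwise $g$-separated, for which the mixing hypothesis reproduces $\E(N)^2$ up to an error that must beat $\E(N)^2\sim B^4 Q_m^2$ with $Q_m^2\approx n^{-4+o(1)}$ --- and this is exactly where exponential $\alpha$-decay is needed (with $g=O(\log n)$), whereas for $\psi$-mixing the multiplicative bound $\P(A\cap\sigma^{-g-n}B)\le(1+\psi(g))\P(A)\P(B)$ reduces the requirement to $\psi(g)\to0$, so any slowly growing $g$ (hence polynomial $\psi$-decay) suffices; and the ``clumping'' pairs, in which a block is repeated (e.g.\ $x$ agrees with $y$ on two distinct blocks $b\ne b'$), which contribute at most $B^3(1+o(1))\sum_w\P(C_m=w)^3\le B^3(1+o(1))\bigl(\sum_w\P(C_m=w)^2\bigr)^{3/2}=B^3(1+o(1))Q_m^{3/2}$ by the power-mean inequality $\|\cdot\|_3\le\|\cdot\|_2$, which is $o(\E(N)^2)$ since $B^3 Q_m^{3/2}/(B^4 Q_m^2)=1/(B Q_m^{1/2})\to0$. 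With these bounds $\P\otimes\P(N_n=0)$ is summable along $n=2^\ell$, so Borel--Cantelli, the monotonicity of $M_n$, and $\log n_{\ell+1}/\log n_\ell\to1$ (with $n_\ell=2^\ell$) give $\liminf_n M_n(x,y)/\log n\ge(2-\delta)/(\overline{H}_2+\eps)$ for $\P\otimes\P$-a.e.\ $(x,y)$; let $\delta,\eps\to0$.

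The main obstacle is precisely this second-moment estimate: decorrelating the $\sim B^4$ pairs of potential matches while keeping the total mixing error well below $\E(N)^2$, which is why a \emph{quantitative} mixing rate is required --- exponential for $\alpha$ and merely polynomial for $\psi$, the multiplicative form of $\psi$-mixing being what tames the sum over the $\card(\mathcal{A})^m$ cylinders that otherwise appears in the $\alpha$-mixing estimates --- and controlling the clumping of matches produced by repeated $m$-words inside a single sequence, which the power-mean bound $\sum_w\P(C_m)^3\le Q_m^{3/2}$ handles. Everything else --- the reductions between $M_n$ and $m_n$, the first-moment bound, and the passages to the limit along a geometric subsequence --- is routine.
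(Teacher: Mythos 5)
Your proposal is correct in substance and follows the same basic strategy as the paper: a first-moment bound plus Borel--Cantelli along a sparse subsequence for \eqref{eqren1}, and a second-moment (Chebyshev) estimate combined with the mixing hypothesis for \eqref{eqren2}. The organization differs, though. The paper works at the exact scale $k_n=\frac{1}{\overline{H}_2+\eps}(2\log n+b\log\log n)$ and counts \emph{all} $n^2$ pairs of starting positions, $S_n(x,y)=\sum_{i,j}\mathbbm{1}_{\sigma^{-i}C_{k_n}(\sigma^j y)}(x)$, then splits the covariance sum into four ranges according to whether $|i-i'|$ and $|j-j'|$ exceed $g+k_n$, with $g=\log(n^4)$; the ``close but unequal'' index pairs are handled by the trivial bound and by $\sum\P(C_{k_n})^3\le\bigl(\sum\P(C_{k_n})^2\bigr)^{3/2}$. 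Your block-with-gaps construction at scale $(2-\delta)\log n/(\overline{H}_2+\eps)$ eliminates the near-diagonal terms by fiat, at the price of a $\delta$-loss removed at the end, and your power-mean bound for the clumping pairs is exactly the paper's subadditivity-of-$x^{2/3}$ step. Your reduction of \eqref{eqren1} to Theorem~\ref{thineq} via the metric $d(x,y)=e^{-\inf\{i\ge0:\,x_i\ne y_i\}}$ (so that $\underline{C}_\P=\underline{H}_2$ and $M_n\le-\log m_n$) is legitimate and matches the paper's remark that the first part is a direct adaptation of Theorem~\ref{thineq}; your alternative direct union bound is what the paper actually does.

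Two details in your treatment of \eqref{eqren2} should be tightened. First, for the genuinely off-diagonal pairs under $\alpha$-mixing you envisage a sum over $\card(\mathcal{A})^{m}$ cylinder words that the exponential decay must beat; that route only works for a finite alphabet and makes the constant in $g=C\log n$ depend on $\card\mathcal{A}$. The paper avoids any cardinality factor by applying the mixing inequality \emph{under the cylinder-probability weights}: for fixed $y$ decorrelate the two $x$-blocks (error $\alpha(g)$, integrated over $y$), and then decorrelate the two $y$-blocks inside the sum $\sum_{C,C'}\P(C)\P(C')\P(C\cap\sigma^{-(j-j')}C')$, whose weights sum to $1$, so the total error per pair of block-pairs is $O(\alpha(g))$ uniformly in the alphabet (this is the symbolic analogue of \eqref{eqmix2}). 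Second, your claim that $\P\otimes\P(N_n=0)$ is summable along $n=2^\ell$ in the $\psi$-mixing case is not justified when $a$ is small and $g$ grows only ``slowly'': the off-diagonal relative error is of order $\psi(g(n))$, and $\psi(g(2^\ell))$ need not be summable (take $g=\log n$ and $a\le1$). Either choose $g$ a small power of $n$ --- affordable in your block scheme precisely because of the $\delta$ margin, since only $B\sim n/(m+g)$ enters the first moment --- or take a sparser subsequence such as $n_\ell=\lceil e^{\ell^k}\rceil$ with $k>1/a$, which still satisfies $\log n_\ell/\log n_{\ell+1}\to1$ (the paper's choice $n_\ell=\lceil e^{\ell^2}\rceil$ plays this role). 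With these adjustments your argument is complete and proves the theorem in the stated generality.
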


Remark that Theorem \ref{seqmat} generalizes the results in \cite{AW} since the processes treated there (i.i.d. and independent irreducible and aperiodic Markov chains on a finite alphabet) are $\alpha$-mixing with an exponential decay and their R\'enyi entropies exist. Moreover, in \cite{AW} the authors used a different proof for each case, while here we present a single and simpler proof. Our proof 
which will be presented in Section~\ref{sec-discrete} 
is an adaptation to symbolic dynamical systems of the results presented in Section~\ref{secshort}. 

One can apply our results to the following examples (which cannot be obtained from \cite{AW}).

\begin{example}[Gibbs states]
Gibbs states of a H\"older-continuous potential $\phi$ are $\psi$-mixing with an exponential decay \cite{Bowen, Ruelle}. Moreover, the R\'enyi entropy exists and $H_2=2P(\phi)-P(2\phi)$ where $P(\phi)$ is the pressure of the potential $\phi$ \cite{haydn-vaienti}.
\end{example}
\begin{example}[Renewal process]
Let $0<q_i<1$ for any $i\in\N$. Consider the Markov chain $(Y_n)_n$ with the following transition probabilities
\[Q_Y(i,j)=\left\{\begin{array}{ll} q_i & \textrm{if }j=0\\
1-q_i & \textrm{if }j=i+1\\
0 & \textrm{otherwise}\end{array}\right.\]
for any $i\in\N , j\in\N$.

Let $(X_n)_n$ be the process defined by $X_n=1$ when $Y_n=0$ and $X_n=0$ when $Y_n\neq 0$ and let $\P$ be its stationary measure. This process is called a binary renewal process. 

Assuming that there exists $a\in(0,1/2)$ such that $a\leq q_i\leq 1-a$ for any $i\in\N$, then this process is $\alpha$-mixing with exponential decay \cite{abadi-car-gallo}, thus we have the inequalities \eqref{eqren1} and \eqref{eqren2}. However, we observe that the existence of the R\'enyi entropy in this case is not known. 
\end{example}

\medskip

\section{Irrational rotations}\label{sec-rot}

In this section we consider the irrational rotations. For $\theta\in \mathbb{R}\setminus \mathbb{Q}$, let $T_\theta$ be the irrational rotation on the unit circle $\mathbb{T}= \mathbb{R}/\mathbb{Z}$  defined by
\[
T_\theta x= x+ \theta. 
\]
 Then for any $n\in\mathbb{Z}$, we have $T_\theta^n x=x+n\theta$ and the shortest distance becomes
\[
m_n(x,y)=\min_{-n\leq j \leq n} \|(x-y)+j\theta\|.
\]
The limit behavior of $m_n(x,y)$ is thus linked to the inhomogeneous Diophantine approximation.

Let 
\[
\eta=\eta(\theta):=\sup\{\beta\geq 1: \liminf_{j\to\infty} j^\beta\|j\theta\|=0\}
\]
be the irrationality exponent of $\theta$.
Now we will show that the result of Theorem \ref{thprinc} does not hold for $T_\theta$.

\begin{theorem}
For $n\in \mathbb{N}$ and $(x,y)\in \mathbb{T}^2$, let $m_n(x,y)$ be the shortest distance between the orbits of $x$ and $y$ defined as above. Then for Lebesgue almost all $(x,y)\in \mathbb{T}^2$, we have
\begin{align*}
\liminf_{n\to\infty} {\log m_n(x,y) \over -\log n } = {1 \over \eta} \quad \text{and} \quad
 \limsup_{n\to\infty} {\log m_n(x,y) \over -\log n }= 1.
\end{align*}
\end{theorem}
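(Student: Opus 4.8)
The plan is to exploit the duality between the shortest distance $m_n(x,y)$ and hitting times, together with the Kim--Seo description of the hitting time statistics for irrational rotations, and to reduce everything to an inhomogeneous Diophantine approximation problem for the orbit of the difference $z = x - y \pmod 1$.

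\textbf{Setting up the reduction.} First I would observe that since $m_n(x,y) = \min_{-n \le j \le n} \|z + j\theta\|$ depends on $(x,y)$ only through $z = x - y$, and the pushforward of Lebesgue measure on $\mathbb{T}^2$ under $(x,y) \mapsto x-y$ is Lebesgue measure on $\mathbb{T}$, it suffices to prove that for Lebesgue-almost every $z \in \mathbb{T}$,
\[
\liminf_{n\to\infty} \frac{\log \big(\min_{|j|\le n}\|z+j\theta\|\big)}{-\log n} = \frac{1}{\eta}, \qquad
\limsup_{n\to\infty} \frac{\log \big(\min_{|j|\le n}\|z+j\theta\|\big)}{-\log n} = 1.
\]
Equivalently, setting $f_n(z) = \min_{|j|\le n}\|z+j\theta\|$, I want: for a.e.\ $z$, $f_n(z) \le n^{-1/\eta + o(1)}$ infinitely often but $f_n(z) \ge n^{-1/\eta - \eps}$ eventually (giving the $\liminf$), while $f_n(z) \le n^{-1+o(1)}$ always for large $n$ but $f_n(z) \ge n^{-1-\eps}$ infinitely often (giving the $\limsup$).

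\textbf{The $\limsup$ part.} The lower bound $\limsup \le 1$ is essentially automatic: among the $2n+1$ points $\{z + j\theta : |j|\le n\}$ on the circle, at least two of the values $\|z+j\theta - (z+j'\theta)\| = \|(j-j')\theta\|$ are controlled, and more directly the $2n+1$ points partition $\mathbb{T}$ into arcs one of which has length $\le 1/(2n+1)$, so $f_n(z) \le C/n$ for all $n$ and all $z$ (this is also implied by Theorem~\ref{thineq} with $\underline C_\mu = 1$). For the matching bound $\limsup \ge 1$, I would use that by the duality with hitting times, $f_n(z) < r$ is equivalent to the orbit of $z$ entering the ball $B(0,r)$ before time $\sim n$ (using $\pm$ symmetry of the rotation); the Kim--Seo result \cite{KimSeo} gives the precise almost-sure asymptotics of $W_r(z,0)$ for a.e.\ $z$, and in particular $W_r(z,0) \ge r^{-1+o(1)}$ infinitely often (along a suitable sequence $r \to 0$), which translates into $f_n(z) \ge n^{-1-\eps}$ for infinitely many $n$.

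\textbf{The $\liminf$ part and the main obstacle.} For the $\liminf$, the upper bound $\liminf \le 1/\eta$ comes from the definition of $\eta$: there is a sequence $j_k \to \infty$ with $\|j_k\theta\| \le j_k^{-\eta + o(1)}$, and then for $z$ in a set of positive measure (in fact, by a covering/Borel--Cantelli argument, for a.e.\ $z$ after iterating) one finds $j$ with $|j| \lesssim n$ and $\|z + j\theta\| \lesssim n^{-\eta/\dots}$ — more carefully, one uses the three-distance theorem: with $q_k$ the denominators of the continued fraction convergents of $\theta$, the points $\{j\theta\}_{j<q_k}$ are roughly $1/q_k$-separated, so $\min_{|j|<q_k}\|z+j\theta\|$ is typically of order $\|q_{k}\theta\|$ or so; choosing $k$ so that $q_k \sim n$ and using $\|q_k\theta\| \approx q_{k+1}^{-1}$ one extracts the exponent $1/\eta$ along the subsequence where $\theta$ is best approximable. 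The reverse inequality $\liminf \ge 1/\eta$ is the heart of the matter and where I expect the real work: one must show that for a.e.\ $z$, for every $\eps>0$ and all large $n$, no $j$ with $|j|\le n$ satisfies $\|z+j\theta\| < n^{-1/\eta-\eps}$. This is a shrinking-target / Borel--Cantelli statement: $\sum_n \mu\{z : \exists |j|\le n, \|z+j\theta\|<n^{-1/\eta-\eps}\} \le \sum_n (2n+1)\cdot 2n^{-1/\eta-\eps}$, which does \emph{not} obviously converge, so a naive first Borel--Cantelli fails. The fix is to pass to a sparse subsequence $n = n_k$ (e.g.\ $n_k = 2^k$) using monotonicity of the event in $n$, and crucially to invoke the Kim--Seo hitting-time asymptotics \cite{KimSeo} which say precisely that for a.e.\ $z$, $\liminf_{r\to 0} \frac{\log W_r(z,0)}{-\log r} = \eta$ (this is the content of their theorem relating return/hitting times to the irrationality exponent); unwinding the duality $W_r(z,0) \le n \iff f_n(z) < r$ then yields exactly $\liminf_n \frac{\log f_n(z)}{-\log n} = 1/\eta$ for a.e.\ $z$. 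So the strategy is to translate both the $\liminf$ and $\limsup$ entirely into statements about $W_r(z,0)$, cite \cite{KimSeo} for the almost-sure behavior of the hitting time, and do the elementary but slightly delicate bookkeeping to convert the $r \to 0$ limits into the stated $n \to \infty$ limits; the main obstacle is carrying out this conversion cleanly, since $f_n$ is a two-sided minimum over $|j|\le n$ whereas $W_r$ is a one-sided first-hitting time, so one needs the reflection $j \mapsto -j$ and an elementary comparison of $W_r(z,0)$ with $W_r(-z,0)$ (equivalently, working with the full orbit $\{j\theta : |j|\le n\}$) to match constants.
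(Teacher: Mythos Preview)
Your duality-with-Kim--Seo plan correctly handles three of the four inequalities (this is exactly what the paper does for $\limsup\le 1$, $\limsup\ge 1$, and $\liminf\ge 1/\eta$), but there is a genuine gap, and you have also swapped the easy and hard halves of the $\liminf$.

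First, the swap. The inequality $\liminf_n \frac{\log m_n}{-\log n}\ge 1/\eta$ is the \emph{easy} direction: it says $m_n\le n^{-1/\eta+\eps}$ for all large $n$, and this follows immediately from the one-sided Kim--Seo bound $W_r\le r^{-\eta-\eps}$ (valid for all small $r$) by setting $r=n^{-1/(\eta+\eps)}$. What you actually wrote as the target statement --- ``for all large $n$, no $|j|\le n$ satisfies $\|z+j\theta\|<n^{-1/\eta-\eps}$'' --- is $m_n\ge n^{-1/\eta-\eps}$ eventually, i.e.\ $\limsup\le 1/\eta$, which is \emph{false} whenever $\eta>1$. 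So your Borel--Cantelli computation is aimed at the wrong (and impossible) inequality.

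Second, the real gap. The hard inequality is $\liminf\le 1/\eta$, meaning $m_n\ge n^{-1/\eta-\eps}$ \emph{infinitely often}. Your final plan is to get this from Kim--Seo via duality, but this does not work, and the paper says so explicitly. The point is that $m_n(x,y)\ge r$ requires \emph{both} one-sided hitting times $W_{B(y,r)}(x)$ and $\widetilde W_{B(y,r)}(x)$ (for the rotations by $\theta$ and $-\theta$) to exceed $n$. Kim--Seo's $\limsup_r\frac{\log W_r}{-\log r}=\eta$ guarantees that each of $W_r$ and $\widetilde W_r$ reaches order $r^{-\eta+o(1)}$ infinitely often, but there is no reason these should occur along the \emph{same} sequence of radii. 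The paper therefore gives a direct argument: it first uses ergodicity of $T_\theta$ to show that $z\mapsto\liminf_n\frac{\log m_n(0,z)}{-\log n}$ is almost surely constant, and then exhibits, for each $\delta>0$, a set of positive Lebesgue measure on which $m_{N_k}(0,z)\ge N_k^{-1/\eta-\delta}$ along a sequence $N_k=\lceil q_k^{\eta-\eps}\rceil$ built from the continued-fraction denominators of $\theta$, via an explicit covering estimate for $\bigcup_{|j|<N_k}B(j\theta,N_k^{-1/\eta-\delta})$. Your three-distance sketch for $\liminf\le 1/\eta$ gestures in this direction but does not supply the needed estimate.

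Two smaller issues: your pigeonhole argument for $\limsup\le 1$ fails because the \emph{largest} gap among $\{j\theta:|j|\le n\}$ can be much bigger than $1/n$ when $\theta$ is Liouville (three-gap gives at most three gap lengths, not that all are $O(1/n)$); and Theorem~\ref{thineq} only yields the bound $2/\underline C_\mu=2$, not $1$. The paper obtains $\limsup\le 1$ from the ``eventually'' part of Kim--Seo's $\liminf=1$, applied to both $W_r$ and $\widetilde W_r$. Also, you misquote Kim--Seo as $\liminf_r\frac{\log W_r}{-\log r}=\eta$; it is the $\limsup$ that equals $\eta$.
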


\begin{proof}

Let 
\[
W_{B(y,r)} (x):=\inf\{n\geq 1: T_\theta^n(x)\in B(y,r)\}
\]
be the waiting time for $x\in \mathbb{T}$ entering the ball $B(y,r)$ of center $y\in \mathbb{T}$ and radius $r>0$.  Kim and Seo (\cite{KimSeo}) proved that for almost all $x$ and $y$ in $\mathbb{T}$,  
\begin{equation}\label{KimS}
\liminf_{r\to 0} {\log W_{B(y,r)} (x) \over -\log r}=1 \quad \text{and} \quad \limsup_{r\to 0} {\log W_{B(y,r)} (x) \over -\log r}=\eta.
\end{equation}
Let us denote $\widetilde W_{B(y,r)} (x)$ the waiting time under the action of the irrational rotation of angle $-\theta$. Since $\eta(\theta)=\eta(-\theta)$, \eqref{KimS} is also satisfied for $\widetilde W_{B(y,r)} (x)$.

By definition, for the time $k=W_{B(y,r)} (x)$, $T_\theta^kx$ firstly enters the ball $B(y,r)$. So, we have $m_n(x,y)< r$, when $W_{B(y,r)} (x)\leq n$. On the other hand,  when $ W_{B(y,r)} (x)>n$ and $\widetilde W_{B(y,r)} (x)>n$,  we have $m_n(x,y)> r$.

By the first equality of (\ref{KimS}), there is a sequence $(r_k)_k$ tending to $0$ such that
\[\lim_{k\to \infty} {\log W_{B(y,r_k)} (x) \over -\log r_k}=1.\]
 Let $n_k=W_{B(y,r_k)} (x)$. Then $m_{n_k}(x,y)< r_k$.  Thus,
\begin{equation}\label{eta}
 \limsup_{n\to\infty} {\log m_n(x,y) \over -\log n } \geq \limsup_{k\to\infty} {\log m_{n_k}(x,y) \over -\log n_k } \geq \lim_{k\to\infty}{\log r_k \over -\log W_{B(y,r_k)} (x)}=1.
\end{equation}

Using again the first equality of (\ref{KimS}), for any $0<\epsilon<1$,  we have
\[
W_{B(y,r)} (x)>  (1/r)^{1-\epsilon} \qquad\textrm{and}\qquad \widetilde W_{B(y,r)} (x)>  (1/r)^{1-\epsilon}, 
\]
provided $r>0$ is small enough.

Therefore, taking $0<\epsilon<1$ and defining $r=n^{-1/(1-\epsilon)}$ for $n\gg1$, we get 

\[
W_{B(y,r)} (x)> n \qquad\textrm{and}\qquad \widetilde W_{B(y,r)} (x)> n,
\]
which implies 
\[
m_n(x,y) > r= n^{-1/(1-\epsilon)}.
\]
Thus
\[
\limsup_{n\to\infty} {\log m_n(x,y) \over -\log n } < {1 \over 1-\epsilon}.
\]
By the arbitrariness of $\epsilon>0$, we  have 
\begin{equation}\label{eta1}
\limsup_{n\to\infty} {\log m_n(x,y) \over -\log n } \leq 1. 
\end{equation}

From the inequalities \eqref{eta} and \eqref{eta1} we get the second part of the theorem.

By the same arguments and the second equality of (\ref{KimS}), we deduce that for any $\epsilon>0$
\[
W_{B(y,r)} (x)\leq  (1/r)^{\eta+\epsilon},
\]
provided $r>0$ small enough.

Hence, defining $r=n^{-1/(\eta+\epsilon)},\ n\gg1$, we have 
$
W_{B(y,r)} (x)\leq n,
$
which implies 
$$
m_n(x,y) \leq r= n^{-1/(\eta+\epsilon)}.
$$
Thus
\[
\liminf_{n\to\infty} {\log m_n(x,y) \over -\log n } \geq {1 \over \eta+\epsilon}.
\]
By the arbitrariness of $\epsilon>0$, we then obtain
\begin{equation}\label{eta2}
 \liminf_{n\to\infty} {\log m_n(x,y) \over -\log n } \geq {1 \over \eta}.
\end{equation}

From the inequalities \eqref{eta1} and \eqref{eta2} we see that for $\eta=1$ the following result holds
$$\lim_{n\to\infty} {\log m_n(x,y) \over -\log n } =1. $$
Thus, from now on we can suppose $\eta >1$ and it only remains to show 
\begin{align}\label{upper_liminf}
\liminf_{n\to\infty} {\log m_n(x,y) \over -\log n } \leq {1 \over \eta}.
\end{align}

We remark that the results of Kim and Seo (\cite{KimSeo}) are not applicable for proving (\ref{upper_liminf}), so we will give a direct proof.

Let $q_k=q_k(\theta)$ be the denominators of the $k$-th convergent of the continued fraction of $\theta$. 
Then (see for example Khintchine's book \cite{Khinchin})
\begin{equation}\label{qn-estimate}
{1\over 2q_{n+1}} <  {1\over q_{n+1}+q_n}< \|q_{n}\theta\| \leq {1 \over q_{n+1}}.
\end{equation}
By the theorem of best approximation (e.g. \cite{RS}), we have 
\begin{equation}
\eta(\theta) =\limsup_{n\to\infty} {\log q_{n+1}\over \log q_n}. \label{def-w}
\end{equation}

First note that since
$$m_n(x,y)=m_n(0,x-y),$$
it is enough to show that for almost all $y\in [0,1]$ 
\begin{align*}
\liminf_{n\to\infty} {\log m_n(0,y) \over -\log n } \leq {1 \over \eta}.
\end{align*}

Second, we consider the following function 
\[f:y\in [0,1]\mapsto \liminf_{n\to\infty} {\log m_n(0,y) \over -\log n }.\] 
Observing $$m_{n+1}(0,y)\leq m_n(0,T_\theta y)=\min_{-n+1\leq j \leq n+1} \|y+j\theta\|\leq m_{n-1}(0,y),$$
we see that $f$ is a $T_\theta$-invariant function. By the ergodicity of the Lebesgue measure with respect to the irrational rotations, we conclude that  $f$ is a constant for almost all $y\in[0,1]$. Since we have already proved that
\begin{align*}
\liminf_{n\to\infty} {\log m_n(0,y) \over -\log n } \geq {1 \over \eta},
\end{align*}
we  only need to show that for any $\delta>0$, the set 
\[ 
E_\delta:=\left\{y\in [0,1]: \text{there exists a sequence } \{N_k\}_k \ \text{such that $\forall k$, }  {\log m_{N_k}(0,y) \over \log N_k} \leq {1 \over \eta}+\delta\right\}
\]
has positive Lebesgue measure.
By the definition of $m_n(x,y)$, we can rewrite $E_\delta$ as
\[ 
E_\delta=\left\{y\in [0,1]: \exists \ \{ N_k \}_k  \ \text{s.t. }  \forall\  k, \ \forall j=-N_k+1,...,N_k-1, \ \| j\theta-y \|\geq {1 \over N_k^{{1 \over \eta}+\delta}}\right\}.
\]

Let
$\tau={1 \over \eta}+\delta$ and take $0<\epsilon<{\delta \eta^2 \over 1+\delta\eta}$. From the identity \eqref{def-w} there exists a subsequence $\{n_k\}_k$ such that 
$$q_{n_k+1} \geq q_{n_k}^{\eta-{\epsilon\over 2}} \geq q_{n_k}^{\eta-\epsilon}>q_{n_k}^{\eta \over 1+\delta\eta}, \ \text{ since $q_k \geq 1$. }$$
Without loss of generality, we still write $\{k\}$ this subsequence.

Take $N_k=\lceil q_k^{\eta-\epsilon} \rceil$ and define the following decreasing sequence of sets
\[
E_{\delta,k}:=\left\{y\in [0,1]:  \forall j=-N_k+1,...,N_k-1, \ \| j\theta-y \|\geq {1 \over N_k^{{\tau}}}\right\}.
\]
Since $E_{\delta}=\cap_{k\geq k_0} E_{\delta,k}$ for any  $k_0\in \mathbb{N}$,
we only need to show that  $\cup_{k\geq k_0} E_{\delta,k}^c$ has Lebesgue measure strictly less than $1$ for some $k_0\in \mathbb{N}$.

Now observing that $
E_{\delta,k}^c=\bigcup_{-N_k< j < N_k} B\big(j\theta, {1\over N_k^\tau}\big),
$
we only have to estimate the Lebesgue measure of the following set
$$\bigcup_{k \geq k_0} \bigcup_{-N_k< j < N_k} B\big(j\theta, {1\over N_k^\tau}\big).$$

Let us first consider the union 
$
\bigcup_{0\leq j < N_k} B\big(j\theta, {1\over N_k^\tau}\big).
$

By the definition of $N_k$ and the facts $\eta-\epsilon>1$ and that $\{q_k \}\subset \N$ is increasing we have $$q_k\leq N_k\leq q_{k+1}.$$
Therefore
\begin{align}\label{eq-union}
\bigcup_{0\leq j < N_k} B\big(j\theta, {1\over N_k^\tau}\big) \subset \bigcup_{i=0}^{q_k-1} \bigcup_{j=0}^{\lceil N_k/q_k\rceil} B\Big((i+jq_k)\theta, {1\over N_k^\tau}\Big).
\end{align}
Moreover, we have
 \begin{align}\label{N_k_tau_upper}
 {1\over N_k^\tau} \leq {1 \over q_k^{(\eta-\epsilon)\tau}}= {1 \over q_k^{(\eta-\epsilon)({1 \over \eta}+\delta)}} =  {1 \over q_k^{1+\delta\eta-\epsilon({1 \over \eta}+\delta)}} ={1 \over q_k^{1+\epsilon_1}}. 
\end{align}
  where $\epsilon_1:=\delta\eta-\epsilon({1 \over \eta}+\delta)>0$.
  
To estimate the measure of the following set
\begin{align*}
\bigcup_{j=0}^{\lceil N_k/q_k\rceil} B\Big((i+jq_k)\theta, {1\over N_k^\tau}\Big),
\end{align*} 
one can observe that the distance between two consecutive centers of the balls in the union is $$\| (i+(j+1)q_k)\theta-(i+jq_k)\theta\|=\|q_k\theta\|.$$ Thus
$$Leb\left(\bigcup_{j=0}^{\lceil N_k/q_k\rceil} B\Big((i+jq_k)\theta, {1\over N_k^\tau}\Big)\right)\leq \left({N_k \over q_k}+2\right)\cdot \|q_k\theta\|+{2 \over N_k^\tau} .$$

From the inequalities \eqref{qn-estimate} and \eqref{N_k_tau_upper} we have
\begin{align*}\label{}
 \left({N_k \over q_k}+2\right)\cdot \|q_k\theta\|+{2 \over N_k^\tau}  \leq & \left({q_k^{\eta-\epsilon}+1 \over q_k}+2\right)\cdot {1 \over q_{k+1}} +{2 \over N_k^\tau} 
 \leq {3q_k^{\eta-\epsilon} \over q_k} \cdot {1 \over q_{k}^{\eta-{\epsilon \over 2}}} + {2\over N_k^\tau}\\
 \leq & {3 \over  q_{k}^{1+{\epsilon \over 2}}} + {2\over q_k^{1+\epsilon_1}}
 \leq {5 \over q_k^{1+\epsilon_2}}, 
\end{align*} 
 where $0<\epsilon_2\leq \min\{\epsilon/2,\epsilon_1\}$. 
 
Let $\beta>1$. 
 For $k$ large enough we obtain
 \begin{align*}
Leb\left(\bigcup_{0\leq j < N_k} B\big(j\theta, {1\over N_k^\tau}\big) \right)\leq q_k \cdot {5 \over q_k^{1+\epsilon_2}}={5 \over q_k^{\epsilon_2}} \leq 5 \beta^{-\epsilon_2 k},
\end{align*}
where the last inequality comes from the assumption $\eta(\theta)>1$.

By symmetry, we also deduce that 
 \begin{align*}
Leb\left(\bigcup_{-N_k< j \leq 0} B\big(j\theta, {1\over N_k^\tau}\big) \right)\leq 5 \beta^{-\epsilon_2 k}.
\end{align*}

Therefore
\begin{align*}
Leb\left(\bigcup_{-N_k< j < N_k} B\big(j\theta, {1\over N_k^\tau}\big) \right)\leq 10 \beta^{-\epsilon_2 k}.
\end{align*}

Note that there exists $k_0\geq 1$, such that
\[
10\sum_{k=k_0}^\infty \beta^{-\epsilon_2 k} = {10\beta^{-\epsilon_2k_0} \over 1- \beta^{-\epsilon_2}}<1.
\]
Thus 
\[
\bigcup_{k\geq k_0} E_{\delta k}^c = \bigcup_{k\geq k_0}\bigcup_{-N_k< j < N_k} B\big(j\theta, {1\over N_k^\tau}\big)
\]
has Lebesgue measure strictly less than $1$. Therefore, (\ref{upper_liminf}) follows.

\smallskip
Hence, finally, we conclude that for almost all $x$ and $y$, 
\[
\liminf_{n\to\infty} {\log m_n(x,y) \over -\log n } = {1 \over \eta}, \quad \text{and} \quad
 \limsup_{n\to\infty} {\log m_n(x,y) \over -\log n }= 1.
\]
\end{proof}


\section{Multidimensional piecewise expanding maps}\label{secexp}
In this section, we will apply our main result to a family of maps defined by Saussol \cite{Saussol1}: multidimensional piecewise uniformly expanding maps. It was observed in \cite{AFLV11} that these maps generalize Markov maps which also contain one-dimensional piecewise uniformly expanding maps.

Let $N\geq 1$ be an integer. We will work in the Euclidean space $\mathbb{R}^N$. We denote by $B_\epsilon(x)$ the ball with center $x$ and radius $\epsilon$. For a set $E\subset\mathbb{R}^N$, we write 
$$B_\epsilon(E):=\{y\in\mathbb{R}^N: \sup_{x\in E} |x-y|\leq \epsilon\}.$$

\begin{definition}[Multidimensional piecewise expanding systems]
Let $X$ be a compact subset of $\R^N$ with $\overline{X^\circ}=X$ and $T:X \rightarrow X$. The system $(X,T)$ is a multidimensional piecewise expanding system if there exists a family of at most countably many disjoint open sets $U_i \subset X$ and $V_i$ such that $\overline{U_i} \subset V_i$ and maps $T_i:V_i  \rightarrow \R^N$ satisfying for some $0< \alpha \leq1$, for some small enough $ \epsilon_0>0$, and for all $i$:
\begin{enumerate}
\item $T\vert_{U_i}=T_i\vert_{ U_i} $ and $B_{\epsilon_0}(TU_i)\subset T_i(V_i) $;
\item $T_i \in C^1(V_i),T_i \text{ is injective and }  T_i^{-1}\in C^1(T_iV_i) .$ Moreover, there exists a constant $c$, such that for all $\epsilon \leq \epsilon_0, z \in T_iV_i$ and $x,y \in B_{\epsilon}(z)\cap T_iV_i$ we have 
$$|\det D_xT_i^{-1}-\det D_yT_i^{-1}| \leq c \epsilon^{\alpha} |\det D_zT_i^{-1}|;$$
\item $Leb(X\setminus \bigcup_i U_i)=0$;
\item there exists $s=s(T)<1$ such that for all $u,v \in TV_i$ with $d(u,v) \leq \epsilon_0$ we have $d(T_i^{-1}u,T_i^{-1}v)\leq s d(u,v)$;
\item let $G(\epsilon,\epsilon_0):=\sup_x  G(x,\epsilon,\epsilon_0)$ where
$$G(x,\epsilon,\epsilon_0)=\sum_i\frac{Leb(T_i^{-1}B_{\epsilon}(\partial TU_i)\cap B_{(1-s)\epsilon_0}(x))}{m(B_{(1-s)\epsilon_0}(x))},$$
then the number $\eta=\eta(\delta):=s^\alpha +2 \sup_{\epsilon \leq \delta} \frac{G(\epsilon)}{\epsilon^\alpha}\delta^\alpha$ satisfies $\sup_{\delta \leq \epsilon_0} \eta(\delta)<1.$
\end{enumerate}
\end{definition}
We will prove that the multidimensional piecewise expanding systems satisfy the conditions of Theorem~\ref{thprinc}. 
\begin{proposition}
Let $(X,T)$ be a topologically mixing multidimensional piecewise expanding map and $\mu$ be its absolutely continuous invariant probability measure. If the density of $\mu$ is bounded away from zero, then for $\mu\otimes\mu$-almost every $(x,y)\in X\times X$,
\[ \underset{n\rightarrow+\infty}{\lim}\frac{\log m_n(x,y)}{-\log n}=\frac{2}{N}.\]

\end{proposition}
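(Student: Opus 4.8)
The plan is to derive the proposition as a direct application of Theorem~\ref{thprinc}: everything reduces to exhibiting a Banach space $\mathcal{C}$ for which the system $(X,\mathcal{A},\mu,T)$ is tight and satisfies (H1), (H2), (H3), and to checking that $C_\mu$ exists and equals $N$. The appropriate choice of $\mathcal{C}$ here is not a Hölder space but Saussol's quasi-Hölder space $V_\alpha=V_\alpha(X)$ of functions of bounded $\alpha$-oscillation, with $\alpha$ the exponent appearing in the definition of the expanding system: this is the space on which the transfer operator of a multidimensional piecewise expanding map is well behaved, and, unlike $\mathcal{H}^\alpha(X,\R)$, it contains the characteristic functions of balls, so that (H3) has a chance of holding. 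Tightness is immediate since $X\subset\R^N$ carries the Euclidean metric (see the remark after Definition~\ref{deftight}).

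I would first record the metric estimates on $\mu$. By Saussol's theorem the invariant density $\rho$ lies in $V_\alpha\subset L^\infty$, hence is bounded above; combined with the standing assumption that $\rho$ is bounded away from zero this gives $0<c_1\le c_2<\infty$ with $c_1\,Leb\le\mu\le c_2\,Leb$. Consequently $\mu(B(x,r))$ is comparable to $r^N$ uniformly for $x$ at distance more than $r$ from $\partial X$ (this uses $\overline{X^\circ}=X$), so $\int_X\mu(B(x,r))\,d\mu(x)\asymp r^N$ and $\underline{C}_\mu=\overline{C}_\mu=N>0$. Moreover the bounded density yields the annulus estimate $\mu\bigl(B(x,r+\rho)\setminus B(x,r-\rho)\bigr)\le c_2\,Leb\bigl(B(x,r+\rho)\setminus B(x,r-\rho)\bigr)\le c_3\,r^{N-1}\rho$ for $0<\rho<r$, which is what will control the oscillations appearing in (H2).

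Next comes (H1). Since $(X,T)$ is topologically mixing, Saussol proved that its transfer operator is quasi-compact on $V_\alpha$ with a spectral gap, which gives exponential decay of correlations: there are $C>0$ and $0\le a<1$ with $\bigl|\int\psi\,\phi\circ T^n\,d\mu-\int\psi\,d\mu\int\phi\,d\mu\bigr|\le C\|\psi\|_{V_\alpha}\|\phi\|_{V_\alpha}\,a^n$ for all $\psi,\phi\in V_\alpha$ (using $V_\alpha\subset L^\infty\subset L^1(\mu)$); absorbing $C$ into the norm gives (H1) with $\theta_n=a^n$. For (H2) and (H3) I would estimate the $V_\alpha$-norms of $\psi_1:x\mapsto\mu(B(x,r))$ and $\psi_2:x\mapsto\mathbbm{1}_{B(y,r)}(x)$: their $L^1(Leb)$-parts are at most a constant times $r^N$, while the $\alpha$-oscillation of $\psi_2$ over balls of radius $\eps$ is bounded by $1$ and supported on $\{\,r-\eps<|x-y|<r+\eps\,\}$, so its $L^1(Leb)$-norm is $\lesssim r^{N-1}\eps$ for $\eps\le r$ and $\lesssim (r+\eps)^N$ for $\eps>r$; multiplying by $\eps^{-\alpha}$ and taking the supremum over $\eps\le\eps_0<1$ (recall $0<\alpha\le1\le N$) gives a bound independent of $r$, and the same computation, using the annulus estimate above in place of the trivial bound, controls the seminorm of $\psi_1$. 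Hence (H2) and (H3) hold with $\xi=0$, for every $y\in X$.

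With tightness, (H1), (H2), (H3) and the existence of $C_\mu=N>0$ established, Theorem~\ref{thprinc} yields $\lim_{n\to\infty}\log m_n(x,y)/(-\log n)=2/C_\mu=2/N$ for $\mu\otimes\mu$-almost every $(x,y)$. The one genuinely delicate point is the verification of (H1)--(H3) in $V_\alpha$: it requires quoting Saussol's spectral-gap theorem in the precise form giving exponential decay of correlations (which is exactly where topological mixing enters) and being careful with the definition of the quasi-Hölder seminorm when bounding $\|\mathbbm{1}_{B(y,r)}\|_{V_\alpha}$ and $\|\mu(B(\cdot,r))\|_{V_\alpha}$; up to the annulus estimate coming from the bounded density, however, these are the verifications already carried out for this class of maps in \cite{Saussol1, AFLV11, AFV}.
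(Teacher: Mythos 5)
Your proposal is correct and follows essentially the same route as the paper: apply Theorem~\ref{thprinc} with Saussol's quasi-H\"older space $V_\alpha$, get (H1) from his exponential decay of correlations for topologically mixing piecewise expanding maps, verify (H2)--(H3) by bounding the oscillation seminorms of $\mu(B(\cdot,r))$ and $\mathbbm{1}_{B(y,r)}$ via the annulus estimates and the bounded density (splitting $\eps\le r$ and $\eps>r$, giving bounds independent of $r$, i.e.\ $\xi=0$), and use $c_1\,Leb\le\mu\le c_2\,Leb$ to get $C_\mu=N$, with tightness automatic in $\R^N$. The paper's Section~\ref{secexp} carries out exactly these computations.
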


First of all, we define the Banach space involved in the mixing conditions. Let $\Gamma \subset X$ be a Borel set. We define the oscillation of $\varphi \in L^1(Leb)$ over $\Gamma$ as

$$osc(\varphi,\Gamma)= \underset{ \Gamma}{ \textrm{ess-sup} } (\varphi)- \underset{ \ \Gamma}{ \textrm{ess-inf} } (\varphi).$$

Now, given real numbers $0<\alpha \leq 1$ and $0<\epsilon_0<1$ consider the following $\alpha$-seminorm
$$|\varphi|_\alpha=\underset{ 0<\epsilon\leq\epsilon_0}{ \sup \  } \epsilon^{ -\alpha} \int_{ X}osc(\varphi,B_\epsilon(x))dx.$$
We observe (see \cite{Saussol1}) that $X\ni x \mapsto osc(\varphi,B_\epsilon(x))$ is a measurable function and 
$$\text{supp} (\text{osc}(\varphi,B_\epsilon(x)))\subset B_\epsilon(\text{supp } \varphi).$$ 

Let $V_{\alpha}$ be the space of $L^1(Leb)-$functions such that $|\varphi|_\alpha<\infty$ endowed with the norm
$$\|\varphi\|_{\alpha}=\|\varphi\|_{L^1(Leb)}+|\varphi|_\alpha.$$
Then
$(V_{\alpha},\|\cdot\|_{\alpha})$ is a Banach space which does not depend on the choice of $\epsilon_0 $  and
 $V_{\alpha} \subset L^\infty $  (see \cite{Saussol1}).

Saussol (\cite{Saussol1}) proved that for a piecewise expanding map $T:X \longrightarrow X$, where $X\subset \R^N$ is a compact set, there exists an absolutely continuous invariant probability measure $\mu$ which enjoys exponential  decay of correlations against $L^1$ observables on $V_{\alpha}$. More precisely, for all $\psi\in V_{\alpha}$, for all $\phi\in L^1(\mu)$ and for all $n\in\N^*$,  we have
\[\left|\int_X\psi.\phi\circ T^n\, d\mu-\int_X \psi d\mu\int_X\phi d \mu\right|\leq\|\psi\|_{\alpha}\|\phi\|_1\theta_n,\]
with $\theta_n =a^{n}$ ($0\leq a<1$). This means that the system $(X,T,\mu)$ satisfies the condition (H1) with  $\mathcal{C}=V_\alpha$.

 It remains to show that the system also satisfies the conditions (H2) and (H3) (with $r_0=\epsilon_0$).
To this end, we need to estimate the norms $\|\psi_1\|_{\alpha}$ and $\|\psi_2\|_{\alpha}$, where $\psi_1$ and $\psi_2$ are the functions defined in (H2) and (H3). Since $\psi_1$ and $\psi_2$ are both in $L^1(Leb)$ we just need to estimate their $\alpha$-seminorms.

From the above observation we notice that 
 \begin{align*}
  \textrm{supp} \ osc(\psi_j,B_\epsilon(\cdot)) \subset X_\epsilon,\ j=1,2,
  \end{align*}
 where $X_\epsilon=\{ x\in \R^N, d(x,X)\leq \epsilon\} $ is a compact set.
Therefore
$$|\psi_j|_\alpha=\underset{ 0<\epsilon\leq\epsilon_0}{ \sup \  } \epsilon^{ -\alpha} \int_{ X_\epsilon}osc(\psi_j,B_\epsilon(x))dx,\ j=1,2.$$ 
 

To estimate $|\psi_j|_\alpha,\ j=1,2$ we define  $$S_j^\epsilon:= \epsilon^{ -\alpha} \int_{ X_\epsilon}osc(\psi_j,B_\epsilon(x))dx,$$
and prove that  $S_j^\epsilon$ is bounded from above by
$C_j\epsilon_0^{ 1-\alpha}$, for some $ C_j>0,\ j=1,2. $

Let us start with $S_1^\epsilon=\epsilon^{ -\alpha} \int_{ X_\epsilon}osc(\mu(B(\cdot,r),B_\epsilon(x))dx$.
Suppose $r\leq \epsilon$. Since $\mu$ is absolutely continuous and its density is bounded away from zero, we can write
\begin{align}\label{ACpsi1}
\psi_1(y)=\mu(B(y,r))=\int_{B(y,r)} h(z)dz,
\end{align}
where the density $h$ belongs to  $V_{\alpha}\subset L^\infty$. It means that 
 $0< c_1\leq h \leq c_2$ for some constants $c_1$ and $c_2.$

By  \eqref{ACpsi1} we have
\begin{align*}
 osc(\psi_1,B_\epsilon(x))= \underset{ y\in B(x,\epsilon)\cap X }{ \textrm{ess-sup} } \int_{B(y,r)} h(z)dz- \underset{ \tilde y\in B(x,\epsilon) \cap X}{ \textrm{ess-inf} } \int_{B(\tilde y,r)} h(z)dz.
 \end{align*}
Therefore $S^1_ \epsilon$ becomes
$$S^1_ \epsilon= \epsilon^{ -\alpha} \int_{ X_\epsilon}\left(\underset{ y\in B(x,\epsilon) \cap X}{ \textrm{ess-sup} } \int_{B(y,r)} h(z)dz- \underset{ \tilde y\in B(x,\epsilon) \cap X}{ \textrm{ess-inf} } \int_{B(\tilde y,r)} h(z)dz
\right)dx.$$
Since $c_1 \leq h \leq c_2$, 
\begin{align*}
S_1^\epsilon& \leq  \epsilon^{ -\alpha} \int_{ X_\epsilon}\left(\underset{ y\in B(x,\epsilon) \cap X}{ \textrm{ess-sup} } \int_{B(y,r)} c_2dz- \underset{ \tilde y\in B(x,\epsilon) \cap X}{ \textrm{ess-inf} } \int_{B(\tilde y,r)} c_1dz
\right)dx\\
&\leq C_0\epsilon^{ -\alpha} \int_{ X_\epsilon} (c_2-c_1) r^Ndx \leq   C_0(c_2-c_1)\epsilon^{ -\alpha+N} Leb(X_{\epsilon}),
 \end{align*}
 where $C_0$ is the Lebesgue measure of the unit ball in $\mathbb{R}^N$.
Using the facts that $X_\epsilon\subset X_{\epsilon_0}$ and that $X_{\epsilon_0}$ is compact, we have

\begin{equation}\label{sup1}
S_1^\epsilon \leq   (c_2-c_1)\epsilon^{ -\alpha+N} Leb(X_{\epsilon_0})\leq C \epsilon_0^{ -\alpha+N}.
\end{equation}

Now suppose $r>\epsilon$. Then for each $y\in B(x,\epsilon)$  we have  $$ B(x,r-\epsilon)\subset B(y,r)\subset B(x,r+\epsilon).$$ Therefore
\begin{align}\label{a}
 \ osc(\psi_1,B_\epsilon(x))\leq \int_{B(x,r+\epsilon)} h(z)dz-\int_{B(x,r-\epsilon)} h(z)dz= \int_{D} h(z)dz
 \leq \|h\|_{\infty}Leb(D),
\end{align}
where $D=B(x,r+\epsilon)\setminus B(x,r-\epsilon)$. It is easy to see that
\begin{eqnarray}\label{b}
Leb(D)\leq 2C_0\epsilon  \sum_{k=0}^{N-1}\binom{N}{k}
\leq 2^{N+1} C_0\epsilon .
\end{eqnarray}
From the inequalities \eqref{a} and \eqref{b} we deduce that
\begin{eqnarray}\label{c}
S_1^\epsilon\leq\epsilon^{-\alpha}\|h\|_{\infty}Leb(D)Leb(X_\epsilon)
\leq 2^N \epsilon_0^{1-\alpha}\|h\|_{\infty}Leb(X_{\epsilon_0}) \leq C \epsilon_0^{ 1-\alpha}.
\end{eqnarray}

Combining \eqref{sup1} and \eqref{c}, we obtain
  \begin{equation}\label{g}
|\psi_1|_\alpha \leq  C_1\epsilon_0^{ 1-\alpha} 
 \end{equation}
 for some constant $C_1$. 
 
 It remains to estimate $S_2^\epsilon$.
 First, let us estimate the oscilation of $\psi_2(z)=\mathbbm{1}_{B(y,r)}(z)$ over the set $B(x,\epsilon)$ when $r\leq\epsilon$:
 \begin{eqnarray*}osc(\psi_2,B(x,\epsilon))&=& \underset{ z\in B(x,\epsilon)\cap X}{ \textrm{ess-sup} } \mathbbm{1}_{B(y,r)}(z)- \underset{ \tilde{z}\in B(x,\epsilon)\cap X}{ \textrm{ess-inf} } \mathbbm{1}_{B(y,r)}(\tilde{z})\\
&\leq &\mathbbm{1}_{B(y,r+\epsilon)}(x).
\end{eqnarray*}
Thus,
\begin{equation}\label{f}
S_2^\epsilon:=\epsilon^{-\alpha}\int_{ X_\epsilon}osc(\psi_2,B_\epsilon(x))dx
\leq C_0\epsilon^{-\alpha}(r+\epsilon)^N
\leq 2^N \epsilon_0^{N-\alpha}.
\end{equation}

When $r>\epsilon$, we have
\begin{eqnarray*}
osc(\psi_2,B(x,\epsilon))\leq \mathbbm{1}_{B(y,r+\epsilon)\setminus B(y,r-\epsilon)}(x).
\end{eqnarray*}
Using the same ideas as in the estimation of  $\|\psi_1\|_\alpha$  and the last inequality, we obtain
\begin{equation}
S_2^\epsilon
\leq2^N \epsilon_0^{1-\alpha}Leb(X_{\epsilon_0}). \label{sup2-psi2}
\end{equation}

Thus, \eqref{f} and \eqref{sup2-psi2} give us
 \begin{equation}\label{psi2}
|\psi_2|_\alpha \leq  C_2\epsilon_0^{ 1-\alpha} 
 \end{equation}
for some constant $C_2$. 

From the inequalities \eqref{g} and \eqref{psi2}, we get (H2) and (H3).

 Finally, a straightforward calculation leads to ${C}_\mu=N$.


\medskip

\section{Proofs of the main results}\label{sec-proof}

\begin{proof}[Proof of Theorem \ref{thineq}]
For $\eps>0$, let us define 
\[k_n=\frac{1}{\underline{C}_\mu-\eps}(2\log n+\log\log n) \quad \text{and} \quad r_n=e^{-k_n}.\]
We also define
\[A_{ij}(y)=T^{-i}B(T^jy,e^{-k_n})\]
and
\[S_n(x,y)=\sum_{i,j=0,\dots,n-1}\mathbbm{1}_{A_{ij}(y)}(x).\]
Observe that
\begin{equation}\label{eqMnSn}
\left\{(x,y): m_n(x,y)< r_n\right\}= \left\{(x,y):S_n(x,y)>0\right\}.\end{equation}
Thus, we have
\[\mu\otimes\mu \left((x,y):m_n(x,y)< r_n\right)=\mu\otimes\mu \left((x,y): S_n(x,y)>0\right)=\mu\otimes\mu \left((x,y): S_n(x,y)\geq1\right).\]
Then, using Markov's inequality, we obtain
\begin{eqnarray*}
\mu\otimes\mu \left((x,y):m_n(x,y)< r_n\right)&\leq& \E(S_n)=\iint\sum_{i,j=0,\dots,n-1}\mathbbm{1}_{A_{ij}(y)}(x)d\mu\otimes\mu(x,y)\\
&=&\sum_{i,j=0,\dots,n-1}\int\left(\int\mathbbm{1}_{A_{ij}(y)}(x)d\mu(x)\right)d\mu(y)\\
&=&\sum_{i,j=0,\dots,n-1}\int\mu\left(B(T^jy,r_n)\right)d\mu(y),
\end{eqnarray*}
since $\mu$ is invariant.

Using again the invariance of $\mu$, we get
\[\mu\otimes\mu \left((x,y):m_n(x,y)< r_n\right)\leq n^2\int\mu(B(y,r_n)d\mu(y).\]
By the definition of the lower correlation dimension and the definition of $k_n$, for $n$ large enough, we have
\[\mu\otimes\mu \left((x,y):m_n(x,y)< r_n\right)\leq n^2r_n^{\underline{C}_\mu-\eps}=\frac{1}{\log n}.\]
Finally, choosing a subsequence $n_\ell=\lceil e^{\ell^2}\rceil$, we have 
\[\mu\otimes\mu \left((x,y):m_{n_\ell}(x,y)< r_{n_\ell}\right)\leq \frac{1}{\log n_\ell}\leq \frac{1}{\ell^2}.\]
Thus $\sum_\ell \mu\otimes\mu \left((x,y):m_{n_\ell}(x,y)< r_{n_\ell}\right)<+\infty$. By the Borel-Cantelli Lemma, for $\mu\otimes\mu$-almost every $(x,y)\in X\times X$, if $\ell$ is large enough then
\[m_{n_\ell}(x,y)\geq r_{n_\ell}\]
and
\[\frac{\log m_{n_\ell}(x,y)}{-\log n_\ell}\leq \frac{1}{\underline{C}_\mu-\eps}\left(2+\frac{\log\log n_\ell}{\log n_\ell}\right).\]
Finally, taking the limit superior in the previous equation and observing that  $(n_\ell)_\ell$ is increasing, $(m_n)_n$ is decreasing and $\underset{\ell\rightarrow+\infty}\lim\frac{\log n_\ell}{\log n_{\ell+1}}=1$, we have
\[ \underset{n\rightarrow+\infty}{\overline\lim}\frac{\log m_n(x,y)}{-\log n}=\underset{\ell\rightarrow+\infty}{\overline\lim}\frac{\log m_{n_\ell}(x,y)}{-\log n_\ell}\leq \frac{2}{\underline{C}_\mu-\eps}.\]
Then the theorem is proved since $\eps$ can be chosen arbitrarily small.
\end{proof}

Before proving Theorem \ref{dsup} we state a few facts in order to 
 simplify the calculations.
At first let us recall the notion of $(\lambda,r)$-grid partition.
\begin{definition}
Let $0<\lambda<1$ and $r>0$. A partition $\{Q_i\}_{i=1}^{\infty}$ of $X$ is called a $(\lambda,r)$-grid partition if there exists a sequence $\{x_i\}_{i=1}^{\infty}$ such that for any $i\in\N$
\[B(x_i,\lambda r)\subset Q_i\subset B(x_i, r).\]
\end{definition}
Now we prove a technical  lemma.
\begin{lemma}\label{lema1}
Under the hypotheses of Theorem \ref{dsup}, there exists a constant $K>0$ such that
\[\int_X\mu\left(B(y,r_n)\right)^2d\mu(y) \leq K\left(\int_X \mu \left(B(y,r_n)\right)d\mu(y)\right)^{3/2},\ \ \ \text{for } n\ \text{large enough}.\]
\end{lemma}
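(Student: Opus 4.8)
The plan is to estimate $\int_X \mu(B(y,r_n))^2\,d\mu(y)$ from above by splitting the ball $B(y,r_n)$ into pieces governed by a $(\lambda,r)$-grid partition and exploiting the tightness of $(X,d)$. First I would fix a $(\lambda,r_n)$-grid partition $\{Q_i\}$ of $X$, say with $\lambda=1/2$, which exists because $(X,d)$ is separable. Writing $I(y)=\{i : Q_i\cap B(y,r_n)\neq\emptyset\}$, tightness forces $\mathrm{card}\,I(y)\leq N$ for a constant $N=N(\lambda,N_0)$ independent of $y$ and $n$, since all such $Q_i$ lie in a ball of radius $Cr_n$ around $y$ and are pairwise "$\lambda r_n$-separated" in the sense of containing disjoint balls of radius $\lambda r_n$. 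Hence
\[
\mu(B(y,r_n))\leq \sum_{i\in I(y)}\mu(Q_i)\leq N\max_{i\in I(y)}\mu(Q_i)\leq N\max_{i\in I(y)}\mu(B(x_i,r_n)),
\]
and therefore $\mu(B(y,r_n))^2\leq N^2\sum_{i\in I(y)}\mu(B(x_i,r_n))^2$.

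Next I would integrate this bound over $y$. Since $i\in I(y)$ implies $y\in B(x_i,2r_n)$ (as $Q_i\subset B(x_i,r_n)$ and $Q_i$ meets $B(y,r_n)$), we get
\[
\int_X \mu(B(y,r_n))^2\,d\mu(y)\leq N^2\sum_i \mu(B(x_i,r_n))^2\,\mu(B(x_i,2r_n)).
\]
Now cover $B(x_i,2r_n)$ by at most $N_0$ balls of radius $r_n$ (tightness again), so $\mu(B(x_i,2r_n))\leq N_0\max_j\mu(B(z_{ij},r_n))$; combining, the sum is dominated by a constant times $\sum_i \mu(B(x_i,r_n))^2\cdot\mu(B(z_{i},r_n))$ for suitable centers $z_i$. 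To finish, I would control each factor of the form $\mu(B(x,r_n))^2\mu(B(z,r_n))$ by the "typical" size of $\mu(B(\cdot,r_n))$. Here the key input is that $\underline C_\mu>0$ and $\overline C_\mu<\infty$: by the definition of the correlation dimension, $\int_X\mu(B(y,r_n))\,d\mu(y)=r_n^{C_\mu+o(1)}$, so $\mu(B(x,r_n))$ is, for most $x$, comparable to this quantity on a logarithmic scale. The cleanest route is to bound $\sum_i\mu(B(x_i,r_n))^2\mu(B(z_i,r_n))$ by, roughly, $\big(\max_x \mu(B(x,r_n))\big)^{1/2}\cdot\big(\sum_i\mu(B(x_i,r_n))^{3/2}\text{-type sum}\big)$ and then relate $\sum_i(\cdots)$ back to $\big(\int\mu(B(y,r_n))\,d\mu\big)^{3/2}$ via Jensen/Hölder and the grid structure $\sum_i\mu(Q_i)=1$; more precisely, $\sum_i\mu(B(x_i,r_n))^{3/2}\leq \big(\max_i\mu(B(x_i,r_n))\big)^{1/2}\sum_i\mu(B(x_i,r_n))\lesssim \big(\int\mu(B(y,r_n))d\mu\big)^{1/2}\cdot$(something), and similarly with powers arranged so that the total exponent on $\int\mu(B(y,r_n))\,d\mu$ is exactly $3/2$.

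The main obstacle — and the step requiring care — is the bookkeeping of exponents at the end: one must pass from a sum $\sum_i\mu(B(x_i,r_n))^2\mu(B(z_i,r_n))$ over grid centers to a power $3/2$ of $\int_X\mu(B(y,r_n))\,d\mu(y)$, and this only works because $r_n=e^{-k_n}$ with $k_n$ chosen (as in the proof of Theorem~\ref{thineq}) so that $r_n^{-\eps}$-type errors are absorbed; the inequality $3/2$ rather than $2$ is precisely what leaves room for those sub-polynomial corrections and for the loss of a $\big(\max_x\mu(B(x,r_n))\big)^{1/2}=r_n^{(\underline C_\mu-\eps)/2}\to 0$ factor. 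So the delicate point is to verify that $\int\mu(B(y,r_n))^2 d\mu(y)\lesssim r_n^{c}\cdot\int\mu(B(y,r_n))\,d\mu(y)$ for some $c>0$ and then observe $r_n^c\lesssim\big(\int\mu(B(y,r_n))\,d\mu(y)\big)^{1/2}$ for $n$ large, using $\overline C_\mu<\infty$; this yields the claimed bound with the exponent $3/2$. I expect the grid/tightness combinatorics to be routine once set up; the exponent arithmetic is where one has to be honest.
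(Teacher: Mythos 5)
Your first half (grid partition at scale comparable to $r_n$, tightness giving a bounded number $K_0$ of cells meeting any ball of radius $2r_n$, and the reduction of $\int\mu(B(y,r_n))^2d\mu(y)$ to sums of cell masses) is exactly the paper's setup. The genuine gap is in the final step, which you yourself flag as "where one has to be honest": your route to the exponent $3/2$ goes through correlation-dimension asymptotics, and it does not close under the lemma's hypotheses. The lemma is stated under the hypotheses of Theorem~\ref{dsup}, which assume only $\underline{C}_\mu>0$; neither existence of $C_\mu$ nor finiteness of $\overline{C}_\mu$ is available, yet you use "$\int\mu(B(y,r_n))d\mu=r_n^{C_\mu+o(1)}$" and "$\overline{C}_\mu<\infty$". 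Worse, the quantitative claims you lean on are unjustified: the hypotheses control only the integral $\int\mu(B(x,r))d\mu(x)$, not pointwise or maximal ball masses, so "$\mu(B(x,r_n))$ is comparable to the integral for most $x$" has no basis (and "most $x$" would not suffice, since your sum runs over all grid centers), and the identity $\bigl(\max_x\mu(B(x,r_n))\bigr)^{1/2}=r_n^{(\underline{C}_\mu-\eps)/2}$ is false in general: the best generic bound is $\sup_x\mu(B(x,r))\leq\bigl(\int\mu(B(y,2r))d\mu(y)\bigr)^{1/2}$, which costs a factor $2$ in the exponent. Tracking your own scheme, you would get $\int\mu(B(y,r_n))^2d\mu\leq r_n^{c}\int\mu(B(y,r_n))d\mu$ with $c\approx(\underline{C}_\mu-\eps)/2$ at best, while the step $r_n^{c}\leq\bigl(\int\mu(B(y,r_n))d\mu\bigr)^{1/2}$ needs $c\geq(\overline{C}_\mu+\eps)/2$; these are compatible only when $\underline{C}_\mu=\overline{C}_\mu$, which is not assumed.

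The point you are missing is that no dimension input (and no special choice of $r_n$) is needed: the $3/2$ comes out of pure counting. In the paper, with a $(\lambda,r_n/2)$-grid $\{Q_i\}$, one bounds $\int\mu(B(y,r_n))^2d\mu\leq\sum_i\mu(Q_i)\bigl(\sum_{j\leq K_0}\mu(Q_{i,j})\bigr)^2\leq K_0^2\sum_{i,j}\mu(Q_{i,j})^3\leq K_0^3\sum_i\mu(Q_i)^3$ (Jensen plus the bounded-multiplicity count), then uses the elementary inequality $\sum_i\mu(Q_i)^3\leq\bigl(\sum_i\mu(Q_i)^2\bigr)^{3/2}$ (subadditivity of $t\mapsto t^{2/3}$, i.e.\ $\ell^2\hookrightarrow\ell^3$), and finally $\sum_i\mu(Q_i)^2=\sum_i\int_{Q_i}\mu(Q_i)d\mu\leq\int\mu(B(y,r_n))d\mu$ because the cells have diameter at most $r_n$, so $Q_i\subset B(y,r_n)$ for $y\in Q_i$. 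That chain gives the lemma with $K=K_0^3$ for every small $r_n$. (A dimension-free repair of your ending also exists, closer to what you were groping toward: $\int\mu(B(y,r_n))^2d\mu\leq\sup_y\mu(B(y,r_n))\int\mu(B(y,r_n))d\mu$ together with $\sup_y\mu(B(y,r_n))^2\leq\int\mu(B(y,2r_n))d\mu\leq K'\int\mu(B(y,r_n))d\mu$, the doubling coming from the same grid/tightness count; but as written, your final step fails.)
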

\begin{proof}

Since $X$ is a metric space, there exist $0<\lambda<\frac12$ and $R>0$ such that for any $0<r<R$ there exists a $(\lambda,r)$-grid partition (see Proposition 2.1 in \cite{GY}).

Let us choose $n$ large enough so that $r_n<\min\{R,r_0/2\}$ ($r_0$ as in Definition~\ref{deftight}). Let $\{Q_i\}_{i=1}^{\infty}$ be a $(\lambda,\frac{r_n}{2})$-grid partition and $\{x_i\}_{i=1}^\infty$ be such that $$B\left(x_i,\lambda \frac{r_n}{2}\right)\subset Q_i
\subset B\left(x_i, \frac{r_n}{2}\right).$$
Then we have
\begin{eqnarray}\label{a1}
\int_X\mu\left(B(y,r_n)\right)^2d\mu(y)&=&\sum_{i}\int_{Q_i}\mu\left(B(y,r_n)\right)^2d\mu(y).
\end{eqnarray}

 Now, fix a ball $B(x_i, {2r_n})$ and consider the set 
 $$D_i=\{x_j: Q_j \cap B(x_i,2r_n) \neq \emptyset\}.$$
 Since the space is tight, one can conclude that (see the proof of Theorem 4.1 in \cite{GY} ) there exists a constant $K_0$ depending only on $N_0$ such that the cardinality card($D_i$)$\leq K_0.$ Therefore
$$\bigcup_{y \in Q_i}B(y,r_n)\subset B(x_i,2r_n) \subset\bigcup_{j=1}^{K_0}Q_{i,j},$$
where $Q_{i,j}$ are elements of the partition.

By \eqref{a1} we have
\begin{eqnarray*}
& &\int_X\mu\left(B(y,r_n)\right)^2d\mu(y)
\leq\sum_{i}\int_{Q_i}\left(\sum_{j=1}^{K_0}\mu\left(Q_{i,j}\right)\right)^2d\mu(y)\\
&=&\sum_{i}\mu(Q_i)\left(\sum_{j=1}^{K_0}\mu\left(Q_{i,j}\right)\right)^2
\leq\sum_{i}\left(\sum_{j=1}^{K_0}\mu\left(Q_{i,j}\right)\right)^3
\leq K_0^2\sum_{i}\sum_{j=1}^{K_0}\mu\left(Q_{i,j}\right)^3,
\end{eqnarray*}
where the last inequality is deduced from Jensen's inequality.
Now, since the elements $Q_{i,j}$ cannot participate in more than $K_0$ different sums (one can see the arguments leading to (12) in \cite{GY}) and  since $x\mapsto x^{2/3}$ is a countably subbadditive function, we have
\begin{eqnarray*}
& &\int_X\mu\left(B(y,r_n)\right)^2d\mu(y)
\leq K_0^3\sum_{i}\mu\left(Q_{i}\right)^3\\
&\leq& K_0^3\left(\sum_{i}\mu\left(Q_{i})\right)^2\right)^{3/2}
=K_0^3\left(\sum_{i}\int_{Q_i}\mu\left(Q_i\right)d\mu(y)\right)^{3/2}.
\end{eqnarray*}
Finally, note that for any $y\in Q_i$, we have $Q_i\subset B(y,r_n)$. Thus
\begin{eqnarray*}
\int_X\mu\left(B(y,r_n)\right)^2d\mu(y)
\leq K_0^3\left(\sum_{i}\int_{Q_i}\mu\left(B(y,r_n\right)d\mu(y)\right)^{3/2},
\end{eqnarray*}
 and the result follows with $K=K_0^3$. 
\end{proof}
We are now ready to prove Theorem \ref{dsup}.
\begin{proof}[Proof of Theorem \ref{dsup}]
Without loss of generality, we will assume in the proof that $\theta_n=e^{-n}$.

For $\eps>0$, let us define 
\[k_n=\frac{1}{\overline{C}_\mu+\eps}(2\log n+b\log\log n)  \quad \text{and} \quad r_n=e^{-k_n}.\]
Using the same notation as in the proof of Theorem \ref{thineq}, we recall that
\begin{equation}\label{eqexp}\E(S_n)=n^2\int\mu(B(y,r_n))d\mu(y).\end{equation}
Moreover, using \eqref{eqMnSn} and Chebyshev's inequality, we obtain
\begin{align}
&\mu\otimes\mu \left((x,y):m_n(x,y)\geq r_n\right)\leq\mu\otimes\mu \left((x,y):S_n(x,y)=0\right) \nonumber \\
\leq&\mu\otimes\mu \left((x,y):|S_n(x,y)-\E(S_n)|\geq |\E(S_n)|\right)
\leq\frac{\var(S_n)}{\E(S_n)^2}. \label{eqmnvarsn}
\end{align}
Thus, we need to control the variance of $S_n$. First of all, we have
\begin{align*}
\var(S_n)
=&\sum_{1\leq i,i',j,j'\leq n}cov(\mathbbm{1}_{A_{ij}},\mathbbm{1}_{A_{i'j'}})\\
=&\sum_{1\leq i,i',j,j'\leq n}\iint \mathbbm{1}_{A_{ij}}\mathbbm{1}_{A_{i'j'}}-\iint\mathbbm{1}_{A_{ij}}\iint\mathbbm{1}_{A_{i'j'}}\\
=&\sum_{1\leq i,i',j,j'\leq n}\iint \mathbbm{1}_{B(T^jy,r_n)}(T^ix)\mathbbm{1}_{B(T^{j'}y,r_n)}(T^{i'}x)-n^4\left(\int\mu(B(y,r_n)d\mu(y)\right)^2.
\end{align*}
Let $g=g(n)=\log(n^{4+4\xi/(\overline{C}_\mu+\eps)})$. We will split the last sum into the   following four parts:
\begin{align*}
\sum_{1\leq i,i',j,j'\leq n}=&\sum_{|i-i'|>g,|j-j'|> g}+\sum_{|i-i'|>g,|j-j'|\leq g}+\sum_{|i-i'|\leq g,|j-j'|> g}+\sum_{|i-i'|\leq g,|j-j'|\leq g}\\
=:& I+II+III+IV.
\end{align*}

At first we observe that if $|i-i'|>g$, then by (H1) and (H3),
\begin{eqnarray}
& &\iint \mathbbm{1}_{B(T^jy,r_n)}(T^{i-i'}x)\mathbbm{1}_{B(T^{j'}y,r_n)}(x)d\mu(x)d\mu(y)\nonumber\\
&\leq&\int\left(\int\mathbbm{1}_{B(T^{j}y,r_n)}(x)d\mu(x)\int\mathbbm{1}_{B(T^{j'}y,r_n)}(x)d\mu(x)\right.\nonumber\\
& &+\left.\theta_g \cdot\|\mathbbm{1}_{B(T^{j}y,r_n)}\|_\mathcal{C} \cdot \|\mathbbm{1}_{B(T^{j'}y,r_n)}\|_\mathcal{C}\right)d\mu(y)\nonumber\\
&\leq&c^2r_n^{-2\xi}\theta_g+\int\mu\left(B(T^{j}y,r_n)\right)\mu\left(B(T^{j'}y,r_n)\right)d\mu(y).\label{eqmix1}
\end{eqnarray}
Therefore
\begin{align*}
I+II\leq& n^4c^2r_n^{-2\xi}\theta_g+n^2\sum_{|j-j'|> g}\int\mu\left(B(T^{j}y,r_n)\right)\mu\left(B(T^{j'}y,r_n)\right)d\mu(y)\\
&+n^2\sum_{|j-j'|\leq g}\int\mu\left(B(T^{j}y,r_n)\right)\mu\left(B(T^{j'}y,r_n)\right)d\mu(y).
\end{align*}

Now, in the case where $|j-j'|>g$, we use (H1) and (H2) to get
\begin{eqnarray}
& &\int\mu\left(B(T^{j}y,r_n)\right)\mu\left(B(T^{j'}y,r_n)\right)d\mu(y)\nonumber\\
&\leq&\left(\int\mu\left(B(y,r_n)\right)d\mu(y)\right)^2+\theta_g \cdot \|\mu\left(B(\cdot,r_n)\right)\|_\mathcal{C}\cdot \|\mu\left(B(\cdot,r_n)\right)\|_\mathcal{C}\nonumber\\
&\leq&\left(\int\mu\left(B(y,r_n)\right)d\mu(y)\right)^2+c^2 r_n^{-2\xi}\theta_g.\label{eqmix2}
\end{eqnarray}
Otherwise
we use H\"older's inequality and the invariance of the measure to obtain
\begin{eqnarray}
& &\int\mu\left(B(T^{j}y,r_n)\right)\mu\left(B(T^{j'}y,r_n)\right)d\mu(y)\nonumber\\
&\leq&\left(\int\mu\left(B(T^{j}y,r_n)\right)^2d\mu(y)\right)^{1/2}\left(\int\mu\left(B(T^{j'}y,r_n)\right)^2d\mu(y)\right)^{1/2}\nonumber\\
&=&\int\mu\left(B(y,r_n)\right)^2d\mu(y).\label{eqhold}
\end{eqnarray}
So the first two terms can be estimated as below
\begin{align}\label{c1}
I+II\leq& 2n^4c^2r_n^{-2\xi}\theta_g+n^4\left(\int\mu\left(B(y,r_n)\right)d\mu(y)\right)^2\nonumber \\
&+2n^3{g}\int\mu\left(B(y,r_n)\right)^2d\mu(y).
\end{align}

The third term can be treated exactly as the second one using the following symmetry on $x$ and $y$:
\begin{eqnarray*} 
\iint \mathbbm{1}_{B(T^jy,r_n)}(T^ix)\mathbbm{1}_{B(T^{j'}y,r_n)}(T^{i'}x)d\mu(x)d\mu(y)\\=\iint \mathbbm{1}_{B(T^ix,r_n)}(T^jy)\mathbbm{1}_{B(T^{i'}x,r_n)}(T^{j'}y)d\mu(y)d\mu(x).
\end{eqnarray*}
Finally, for the last term we use the boundedness of the indicator function and the invariance of the measure to obtain
\begin{eqnarray}
& &\iint \mathbbm{1}_{B(T^jy,r_n)}(T^ix)\mathbbm{1}_{B(T^{j'}y,r_n)}(T^{i'}x)d\mu(x)d\mu(y)\nonumber\\
&\leq&\iint \mathbbm{1}_{B(T^jy,r_n)}(T^ix)d\mu(x)d\mu(y)\nonumber\\
&\leq&\int\mu\left(B(y,r_n)\right)d\mu(y).\label{eqdiag}
\end{eqnarray}

Therefore,
\begin{align}\label{d}
III+IV\leq& n^4c^2r_n^{-2\xi}\theta_g+2  g n^3\int\mu\left(B(y,r_n)\right)^2d\mu(y)\nonumber \\
&+4n^2 g^2\int\mu\left(B(y,r_n)\right)d\mu(y).
\end{align}

Combining together the estimates  \eqref{eqexp}, \eqref{eqmnvarsn}, \eqref{c1} and \eqref{d}, 
we obtain
\begin{eqnarray*} 
\frac{\var(S_n)}{(\E(S_n))^2}&\leq& \frac{3n^4c^2r_n^{-2\xi}\theta_g+4n^2 g^2\int\mu\left(B(y,r_n)\right)d\mu(y)}{\left(n^2\int\mu(B(y,r_n)d\mu(y)\right)^2}\\
& &+\frac{4n^3 g\int\mu\left(B(y,r_n)\right)^2d\mu(y)}{\left(n^2\int\mu(B(y,r_n)d\mu(y)\right)^2}.
\end{eqnarray*}

To estimate the first term, we use the information on the decay of correlations (H1) and the choice of $g$ to obtain
\begin{eqnarray}
 \frac{3n^4c^2r_n^{-2\xi}\theta_g}{\left(n^2\int\mu(B(y,r_n))d\mu(y)\right)^2}
&\leq&3c^2\theta_gr_n^{-2(\overline{C}_\mu+\eps)}r_n^{-2\xi}\nonumber\\
&\leq&3c^2\theta_gn^{4}(\log n)^{2b}n^{4\xi/(\overline{C}_\mu+\eps)}(\log n)^{{2\xi b \over \overline{C}_\mu+\eps}}\nonumber\\
&\leq&3c^2(\log n)^{2b\left(1+{\xi \over\overline{C}_\mu+\eps}\right)}. \label{est1}
\end{eqnarray}

For the second term, we use again our choice of $g$ to get
\begin{eqnarray}
\frac{4n^2 g^2\int\mu\left(B(y,r_n)\right)d\mu(y)}{\left(n^2\int\mu(B(y,r_n))d\mu(y)\right)^2}
&\leq &4 g^2(\log n)^b
=4\left(4+\frac{4\xi}{\overline{C}_\mu+\eps}\right)^2(\log n)^{2+b}.\label{est3}
\end{eqnarray}

For the last estimate,  we use Lemma \ref{lema1} to obtain
\begin{eqnarray}
\frac{4n^3 g\int\mu\left(B(y,r_n)\right)^2d\mu(y)}{\left(n^2\int\mu(B(y,r_n))d\mu(y)\right)^2}
&\leq&\frac{4K  g }{n\left(\int\mu(B(y,r_n)d\mu(y)\right)^{1/2}}\nonumber\\
&\leq& 4K\frac{ g}{n}r_n^{-\frac{\overline{C}_\mu+\eps}{2}}\nonumber\\
&=&4K g\cdot(\log n)^{{b\over 2}}\nonumber \\
&=&4K\left(4+{4\xi \over \overline{C}_\mu+\eps}\right)(\log n)^{1+{b\over 2}}.\label{est2}
\end{eqnarray}

Now, taking  $b<-2$, and combining \eqref{eqmnvarsn}, \eqref{est1}, \eqref{est3} and \eqref{est2}, we have 
\[\mu\otimes\mu \left((x,y),m_n(x,y)\geq r_n\right)\leq\frac{\var(S_n)}{(\E(S_n))^2}\leq K_1(\log n)^{1+{b\over 2}},\]
for large enough $n$, and for some constant $K_1$.

Thus, choosing $b<-4$ and the subsequence $n_\ell=\lceil e^{\ell^2}\rceil$, 
we can apply the Borel-Cantelli Lemma as in the proof of Theorem \ref{thineq}. Moreover, since by Theorem \ref{thineq}  $m_n(x,y)>0$ a.e., we can consider the quantity $\log m_n(x,y)$ and we obtain
\[ \underset{n\rightarrow+\infty}{\underline\lim}\frac{\log m_n(x,y)}{-\log n}=\underset{\ell \rightarrow+\infty}{\underline\lim}\frac{\log m_{n_\ell}(x,y)}{-\log n_\ell}\geq \frac{2}{\overline{C}_\mu+\eps}.\]
Then the theorem is proved since $\eps$ can be chosen arbitrarily small.
\end{proof}

Now we explain how to modify the proof of Theorem \ref{dsup} to prove Theorem \ref{thprinc2}.
\begin{proof}[Proof of Theorem \ref{thprinc2}]
When our Banach space $\mathcal{C}$ is the space of H\"older functions, (H3) cannot be satisfied since the characteristic functions are not continuous. Thus the only difference in the proof will be in \eqref{eqmix1} and \eqref{eqmix2} where we use the mixing hypothesis.

First, one can easily adapt \eqref{eqmix1} in this setting approximating the characteristic functions by Lipschitz functions exactly as in the proof of Lemma 9 in \cite{rapid}.

Then, to obtain \eqref{eqmix2} we just need to prove that if (HA) is satisfied then the function $x\mapsto \mu(B(x,r))$ is H\"older, i.e. (H2) is satisfied.

In fact, let $x,y\in X$ and $0<r<r_0$. If $\|x-y\|<r $ then by (HA),
\begin{eqnarray*}
\| \mu(B(x,r))- \mu(B(y,r))\|&\leq&  \mu\left(B(x,r+\|x-y\|)\backslash B(x,r-\|x-y\|)\right)\\
&\leq&r^{-\xi}\|x-y\|^{\beta}.
\end{eqnarray*}
If $\|x-y\|\geq r $ then
\begin{eqnarray*}
\| \mu(B(x,r))- \mu(B(y,r))\|\leq 2
\leq\frac{2}{r}\|x-y\|.
\end{eqnarray*}
Thus, the function $x\mapsto \mu(B(x,r))$ is H\"older, one can applied (H1) and (H2) to obtain \eqref{eqmix2} and the theorem is proved.
\end{proof}


\medskip

\section{Proof of the symbolic case}\label{sec-discrete}

The proof of the first part of Theorem~\ref{seqmat} is a simple adaptation of the proof of Theorem \ref{thineq} and a simpler version was also proved in \cite{AW}. 
To do this adaptation, one must substitute $-\log m_n(x,y)$ by $M_n(x,y)$ and the balls $B(x, e^{-k})$ must be substituted by cylinders $C_k(x)$.

We will just focus on the second part of the theorem and explain the main differences with Theorem \ref{thprinc}. We will assume that the system is $\alpha$-mixing with an exponential decay, the $\psi$-mixing case can be easily deduced using the same ideas.
\begin{proof}[Proof of Theorem~\ref{seqmat}]
For $\eps>0$, let us define 
\[k_n=\frac{1}{\overline{H}_2+\eps}(2\log n+b\log\log n).\]
We also define
\[A_{ij}(y)=\sigma^{-i}C_{k_n}(\sigma^jy)\]
and
\[S_n(x,y)=\sum_{i,j=1,\dots,n}\mathbbm{1}_{A_{ij}(y)}(x).\]
We observe that
\begin{equation}\label{expsh}\E(S_n)=n^2\sum_{C_{k_n}} \P(C_{k_n})^2,\end{equation}
where the sum is taken over all the cylinders of size $k_n$.

Following the lines of the proof of Theorem \ref{thprinc}, we have
\begin{equation}\label{eqvarsh}
\P\otimes\P \left((x,y): M_n(x,y)\leq k_n\right)\leq\frac{\var(S_n)}{\E(S_n)^2}.\end{equation}
Again, we will estimate the variance dividing the sum of $\var(S_n)$ into $4$ terms. Let $g=\log(n^4).$
For $i-i'>g+k_n$, we have the equivalent of equation \eqref{eqmix1}:
\begin{eqnarray}
& &\iint \mathbbm{1}_{C_{k_n}(\sigma^jy)}(\sigma^ix)\mathbbm{1}_{C_{k_n}(\sigma^{j'}y)}(\sigma^{i'}x)d\P(x)d\P(y)\nonumber\\
&\leq&\alpha(g)+\int\P\left(C_{k_n}(\sigma^{j}y)\right)\P\left(C_{k_n}(\sigma^{j'}y)\right)d\P(y).\nonumber
\end{eqnarray}
If, moreover, $j-j'>g+k_n$, we have the equivalent of \eqref{eqmix2}, that is
\begin{eqnarray}
& &\int\P\left(C_{k_n}(\sigma^{j}y)\right)\P\left(C_{k_n}(\sigma^{j'}y)\right)d\P(y)\nonumber\\
 &=&\int\P\left(C_{k_n}(\sigma^{j-j'}y)\right)\P\left(C_{k_n}(y)\right)d\P(y)\nonumber\\
 &=&\sum_{C_{k_n},C'_{k_n}}\P(C_{k_n})\P(C'_{k_n})\P\left(C_{k_n}\cap \sigma^{-(j-j')}C'_{k_n}\right)\nonumber\\
 &\leq&\sum_{C_{k_n},C'_{k_n}}\P(C_{k_n})\P(C'_{k_n})\left(\P(C_{k_n})\P(C'_{k_n})+\alpha(g)\right)\nonumber\\
 &\leq&\alpha(g)+\left(\sum_{C_{k_n}}\P(C_{k_n})^2\right)^2.\nonumber
\end{eqnarray}
However, if $j-j'\leq g+k_n$, we obtain the following inequality, equivalent of \eqref{eqhold}:
\begin{equation*}
\int\P\left(C_{k_n}(\sigma^{j}y)\right)\P\left(C_{k_n}(\sigma^{j'}y)\right)d\P(y)\leq
\sum\P(C_{k_n})^3.\end{equation*}
As in the proof of Lemma \ref{lema1}, using the subbaditivity of $x\mapsto x^{2/3}$, we have
\begin{equation*}
\sum\P(C_{k_n})^3\leq\left(\sum\P(C_{k_n})^2\right)^{3/2}.\end{equation*}
Finally, when $|i-i'|\leq g+k_n$ and $|j-j'|\leq g+k_n$, we have the equivalent of \eqref{eqdiag}:
\begin{equation*}
\iint \mathbbm{1}_{C_{k_n}(\sigma^jy)}(\sigma^ix)\mathbbm{1}_{C_{k_n}(\sigma^{j'}y)}(\sigma^{i'}x)d\P(x)d\P(y)\leq\sum\P(C_{k_n})^2.
\end{equation*}

Then, one can gather these estimates to obtain
\begin{align*}
&\P\otimes\P \left((x,y):M_n(x,y)\leq k_n\right)\\
\leq &\frac{2n^4\alpha(g)+2n^3(g+k_n)\left(\sum\P(C_{k_n})^2\right)^{3/2}+n^2(g+k_n)^2\sum\P(C_{k_n})^2}{\left(n^2\sum \P(C_{k_n})^2\right)^2}.
\end{align*}
Thus, for $b<-2$,
\[
\P\otimes\P \left((x,y),M_n(x,y)\leq k_n\right)=\mathcal{O}((\log n)^{1+{b\over 2}}).\]
To conclude the proof, we use the Borel-Cantelli Lemma, exactly as in the proof of Theorem \ref{thprinc}.
\end{proof}

\medskip

\subsection*{Acknowledgements}The authors would like to thank Rodrigo Lambert and Mike Todd for various comments on a first draft of this article.

\medskip


\begin{thebibliography}{99}
\bibitem{abadi-car}
M. Abadi and L. Carde\~no, \emph{R\'enyi entropies and large deviations for
the first match function}, IEEE Trans. Inform. Theory, \textbf{61} (2015), no. 4, 1629--1639.

\bibitem{abadi-car-gallo} M. Abadi, L. Carde\~{n}o, and S. Gallo, \emph{Potential well spectrum and hitting time in renewal processes}, J. Stat. Phys., \textbf{159} (2015), no. 5, 1087--1106.

\bibitem{lambert}M. Abadi and R. Lambert,
\emph{From the divergence between two measures to the shortest path between two observables}, Ergodic Theory Dynam. Systems, published online.


\bibitem{AFLV11}
J. F. Alves, J. M. Freitas, S. Luzzatto, and S. Vaienti, \emph{From rates of mixing to recurrence times via large deviations}, Adv. Math., \textbf{228} (2011), no. 2, 1203--1236.


\bibitem{AW}
\newblock R. Arratia and M. Waterman,
\newblock \emph{An Erd\"os--R\'enyi law with shifts},
\newblock Adv. Math., \textbf{55} (1985), 13--23.

\bibitem{AW1}R. Arratia and M. Waterman,
\newblock \emph{Critical phenomena in sequence matching}, Ann. Probab., {\bf 13} (1985), no. 4, 1236--1249. 


\bibitem{AW2}R. Arratia and M. Waterman,
\newblock \emph{The Erd\"os-R\'enyi strong law for pattern matching with a given proportion of mismatches}, Ann. Probab., {\bf 17} (1989), no. 3, 1152--1169.

\bibitem{AW3}R. Arratia and M. Waterman,
\newblock \emph{A phase transition for the score in matching random sequences allowing deletions}, Ann. Appl. Probab.,  {\bf 4} (1994), no. 1, 200--225. 
 
\bibitem{AGW}R. Arratia, L. Gordon and M. Waterman,
\newblock \emph{An extreme value theory for sequence matching}, Ann. Statist., {\bf 14} (1986), no. 3, 971--993.

 \bibitem{athreya} J. S. Athreya, \emph{Logarithm laws and shrinking target properties},
Proceedings of the Indian Academy of Sciences-Mathematical Sciences, Volume {\bf 119}, Number 4, pages 541--559, 2009.

\bibitem{AFV} H. Ayta\c{c}, J. M. Freitas and S. Vaienti, Laws of rare events for deterministic and random dynamical systems;
Trans.\ Am.\ Math.\ Soc.,\, {\bf 367} (2015), 8229--8278.




\bibitem{barbaroux}
J.-M. Barbaroux, F. Germinet and S. Tcheremchantsev, \emph{Generalized fractal dimensions: equivalences and basic properties}, J. Math. Pures Appl. {\bf 80}, 10 (2001), 977--1012.

\bibitem{BC} M. Boshernitzan and J. Chaika,\emph{ Diophantine properties of IETs and general systems: quantitative proximality and connectivity}, Invent. Math. {\bf 192} (2013), no. 2, 375--412. 

\bibitem{Bowen}
R. Bowen, \emph{Equilibrium States and the Ergodic Theory of Anosov Diffeomorphisms}, Lecture Notes in Math, Vol. {\bf 470}, Springer, Berlin, 1975.

\bibitem{Bradley} R. Bradley, \emph{Basic Properties of Strong Mixing Conditions. A Survey and Some Open Questions}, Probability Surveys
Vol. {\bf 2} (2005) 107--144.

\bibitem{brad1}R. Bradley, \emph{Introduction to strong mixing conditions}, Vol. {\bf 1, 2} and {\bf 3}, Kendrick Press, Heber City, UT, 2007.


\bibitem{chazottes} J-R. Chazottes and P. Collet, \emph{Poisson approximation for the number of visits to balls in non-uniformly hyperbolic dynamical systems}, Ergodic Theory Dynam. Systems, {\bf 33} (2013), 49--80.


\bibitem{collet} P. Collet, \emph{Statistics of closest return for some non-uniformly hyperbolic systems}, Ergodic Theory Dynam. Systems, {\bf 21} (2001), 401--420.

\bibitem{cutler} C. D. Cutler and D. A. Dawson, \emph{Estimation of dimension for spatially distributed data and related limit theorems}, J. Multivariate Anal., {\bf 28} (1989), no. 1, 115--148. 
 
 \bibitem{DKZ} A. Dembo, S. Karlin and O. Zeitouni, \emph{Critical phenomena for sequence matching with scoring}, Ann. Probab., \textbf{22} (1994) 1993--2021.
 
 \bibitem{vaienti1} D. Faranda, H. Ghoudi, P. Guiraud and S. Vaienti, \emph{Extreme value theory for synchronization of coupled map lattices}, Nonlinearity, \textbf{31} (2018), 3326--3358.

\bibitem{faranda-vaienti}D. Faranda and S. Vaienti, \emph{Correlation dimension and phase space contraction via extreme value theory}, Chaos, \textbf{28} (2018), 041103.

\bibitem{FFT10} A. C. M. Freitas, J. M. Freitas and M. Todd, \emph{Hitting time statistics and extreme value theory}. Probab. Theory Related Fields, {\bf 147} (2010), 675--710.

\bibitem{FFT} A. C. M. Freitas, J. M. Freitas and M. Todd, \emph{The extremal index, hitting time statistics and periodicity}. Adv. Math. {\bf 231} (2012), 2626--2665.

\bibitem{FFT2} A. C. M. Freitas, J. M. Freitas and M. Todd, \emph{The compound Poisson limit ruling periodic extreme behaviour of non-uniformly hyperbolic dynamics}, Comm. Math. Phys., {\bf 321} (2013), no. 2, 483--527.

\bibitem{gp1}P. Grassberger and I. Procaccia, \emph{Measuring the strangeness of strange attractors}, Physica D, {\bf 9} (1--2) (1983), 189--208.

\bibitem{gp2}P. Grassberger and I. Procaccia, \emph{Characterization of strange attractors}, Phys. Rev. Lett., {\bf 50} (1983), 346--349. 


\bibitem{GHN} C. Gupta, M. Holland and M. Nicol, \emph{Extreme value theory and return time statistics for dispersing billiard maps and flows, Lozi maps and Lorenz-like maps}, Ergodic Theory Dynam. Systems, {\bf 31} (2011), 1363--1390.

\bibitem{GY}M. Guysinsky and S. Yaskolko, \emph{Coincidence of various dimensions associated with metrics and measures on metric spaces}, Discrete Contin. Dyn. Syst., {\bf 3} (1997) 591--603.

\bibitem{haydn-vaienti} N. Haydn and S. Vaienti, \emph{The R\'enyi entropy function and the large deviation of short return times}, Ergodic Theory Dynam. Systems, \textbf{30} (2010), no. 1, 159--179.

\bibitem{haydn-w} N. T. A. Haydn and K. Wasilewska, \emph{Limiting distribution and error terms for the number of visits to balls in non-uniformly hyperbolic dynamical systems},
Discrete Contin. Dyn. Syst., {\bf 36} (2016),  2585--2611.



\bibitem{hof} F. Hofbauer, G. Keller, \emph{Ergodic properties of invariant measures for piecewise monotonic transformations}, 
Math. Z., \textbf{180} (1982), no. 1, 119--140.

\bibitem{KO} S. Karlin, F. Ost,\emph{ Maximal length of common words among random letter sequences},
Ann. Probab. \textbf{16} (1988) 535--563.

\bibitem{Khinchin} A. Ya. Khintchine, {\it Continued Fractions}, Univ. Chicago Press, Chicago, 1964.

\bibitem{KimSeo} D. H. Kim and B. K. Seo, \emph{The waiting time for
irrational rotations}, Nonlinearity, \textbf{16} (2003), 1861--1868.

\bibitem{konto}
I. Kontoyiannis, \emph{Asymptotic recurrence and waiting times for stationary processes}, J. Theor. Probab., \textbf{11} (1998), no. 3, 795--811.

\bibitem{Neu}C. Neuhauser, \emph{A phase transition for the distribution of matching blocks}, Combin.
Probab. Comput., \textbf{5} (1996), 139--159.

\bibitem{parry} W. Parry, \emph{On the $\beta$-expansions of real numbers}, Acta Math. Acad. Sci. Hungar., \textbf{11} (1960) 401--416.

\bibitem{Pes}
Y. Pesin, \emph{Dimension Theory in Dynamical Systems}, University of Chicago Press, Chicago, 1997.

\bibitem {PW}
Y. Pesin and H. Weiss, \emph{A multifractal analysis of equilibrium measures for conformal expanding maps and Moran-like geometric constructions},
J. Stat. Phys., \textbf{86} (1997), 233--275.

\bibitem{philipp} W. Philipp, \emph{Some metric theorems in number theory}, Pacific J. Math., \textbf{20} (1967) 109--127.

\bibitem{philipp2} W. Philipp, \emph{Some metrical theorems in number theory II}, Duke Math. J., \textbf{37} (1970) 447--458.

\bibitem{pittel} B. Pittel, \emph{Asymptotical growth of a class of random trees}, Ann. Probab., \textbf{13} (1985), 414--427.

\bibitem{RSW}
G. Reinert, S.Schbath and M. Waterman (2005). \emph{Applied Combinatorics on Words}, volume {\bf 105} of Encyclopedia of Mathematics and its Applications, chapter Statistics on Words with Applications to Biological Sequences. Cambridge University Press, 2005.

\bibitem{RS}
A. Rockett and P. Sz\"usz,
{\it Continued Fractions}, World Scientific, 1992.

\bibitem{Saussol1}
B. Saussol, \emph{Absolutely continuous invariant measures for multidimensional expanding maps,} Israel J. Math., {\bf 116} (2000), 223--248.

\bibitem{rapid}
\newblock B. Saussol,
\newblock \emph{Recurrence rate in rapidly mixing dynamical systems},
\newblock Discrete Contin. Dyn. Syst., \textbf{15} (2006), 259--267.

\bibitem{saussol} B.\ Saussol,
\emph{ An introduction to quantitative {P}oincar\'e recurrence in dynamical systems,}
 Rev. Math. Phys., \textbf{21} (2009) 949--979.
 

\bibitem{Ruelle}
P. Walters, \emph{Ruelle's operator theorem and g-measures}, Trans. Am. Math. Soc., {\bf 214}, (1975), 375--387.

\bibitem{W-book}M. Waterman, \emph{Introduction to Computational Biology: Maps, Sequences and Genomes},
Chapman and Hall, London (1995).


\end{thebibliography}
\end{document}